\numberwithin{equation}{section}
\newtheorem{theorem}{Theorem}
\newtheorem{lemma}{Lemma}
\newtheorem{conj}{Conjecture}
\newtheorem{proposition}{Proposition}
\newtheorem{defi}[]{Definition}
\renewcommand{\geq}{\geqslant}
\renewcommand{\leq}{\leqslant}
\newcommand{\om}{\omega}
\newcommand{\R}{\mathbb{R}}
\newcommand{\eps}{\varepsilon}
\newcommand{\la}{\langle}
\newcommand{\ra}{\rangle}
\newcommand{\dd}{\,\mathrm{d}}
\title{Large signed subset sums}
\author{Gergely Ambrus}
\address{
Gergely Ambrus\\
Alfr\'ed R\'enyi Institute of Mathematics, 13-15. Reáltanoda u., 1053 Budapest, Hungary}
\email[G. Ambrus]{\texttt ambrus@renyi.hu}
\author{Bernardo Gonz\'alez Merino}
\address{Bernardo Gonz\'alez Merino\\
Universidad de Murcia, Facultad de Educaci\'on, Departamento de Did\'actica de las Ciencias Matem\'aticas y Sociales, 30100-Murcia, Spain}
\email[B. Gonz\'alez Merino]{\texttt bgmerino@um.es}
\begin{document}

\begin{abstract} We study the following question: for given $d\geq 2$, $n\geq d$ and $k \leq n$, what is the largest value $c(d,n,k)$ such that from any set of $n$ unit vectors in $\R^d$, we may select $k$ vectors with corresponding signs $\pm 1$ so that their signed sum has norm at least $c(d,n,k)$?

The problem is dual to classical vector sum minimization and balancing questions, which have been studied for over a century. We give asymptotically sharp estimates for $c(d,n,k)$ in the general case. In several special cases, we provide stronger estimates:  the quantity $c(d,n,n)$ corresponds to the $\ell_p$-polarization problem, while determining $c(d, n, 2)$ is equivalent to estimating the coherence of a vector system, which is a special case of $p$-frame energies. Two new proofs are presented for the classical Welch bound when $n = d+1$. For large values of $n$, volumetric estimates are applied for obtaining fine estimates on $c(d,n,2)$.  Studying the planar case, sharp bounds on $c(2, n, k)$ are given. Finally, we determine the exact value of  $c(d,d+1,d+1)$ under some extra assumptions.
\end{abstract}

\thanks{Research of the first author was supported by NKFIH grants PD-125502 and KKP-133819.
This research is a result of the activity developed within the framework of the Programme in Support of Excellence Groups of the Regi\'on de Murcia, Spain, by Fundaci\'on S\'eneca, Science and Technology Agency of the Regi\'on de Murcia.
The second author is partially supported by Fundaci\'on S\'eneca project 19901/GERM/15 and by MICINN Project PGC2018-094215-B-I00, Spain.}

\keywords{Vector sums, sign sequences, coherence bounds, polarization problems, extremal vector systems. }

\subjclass[2020]{52A37, 52A40, 05A99}


\maketitle

\section{History and results}
The study of vector sum problems dates back more than a century: see e.g. the 1913 work of Steinitz~\cite{S13} answering a question of Riemann and Lévy. In the present article, we will consider the dual of two classical problems belonging to this family.

The {\em unit vector balancing problem} asks for the following. Given unit vectors $u_1, \ldots, u_n$ in $\R^d$, one should find signs $\eps_1, \ldots, \eps_n \in \{ \pm 1 \}$ so that the sum
\[
\eps_1 u_1 + \ldots + \eps_n u_n
\]
has small norm. Equivalently, the goal is  to partition the vectors into two classes so that the corresponding partial sums are close to each other. It is natural to look for the best possible bound: in 1963, Dvoretzky~\cite{D63} asked for determining
\begin{equation}\label{maxmin_dv}
\max_{(u_i)_1^n \subset S^{d-1}} \min_{\eps \in \{ \pm 1 \}^n} \abs{\sum_{i=1}^n \eps_i u_i}
\end{equation}
for fixed $d$ and $n\geq d$, where $S^{d-1}$ denotes the unit sphere of $\R^d$. Here and later on, $|.|$ stands for the Euclidean (or $\ell_2$) norm.  Spencer~\cite{S81} gave a probabilistic proof showing that for every $n \geq d$, the above quantity equals to $\sqrt{d}$ (note that this bound is independent of the number of vectors!). Moreover, the same bound holds for sets of vectors of norm {\em at most $1$}. Sharpness is illustrated for example by taking $n = d$ and setting the vector set $(u_i)_1^d$ to be an orthonormal base of $\R^d$. Related combinatorial games were studied by Spencer~\cite{S77}. Generalizing Dvoretzky's question, Bárány and Grinberg~\cite{BG81} showed that given any set of vectors in the unit {\em ball} of a {\em $d$-dimensional normed space,} one may always find corresponding signs so that the signed sum has norm at most $d$.

Switching millennia did not halt related research: Swanepoel~\cite{S00} showed that given an {\em odd} number of {\em unit} vectors in a normed plane, there exists a corresponding signed sum of norm at most 1. Blokhuis and Chen~\cite{BC18} considered yet another twist of the problem, allowing for the coefficients to be 0 as well. Finally, we list another variation of~\eqref{maxmin_dv} due to Komlós~\cite{S87}, which has become a central question in geometric discrepancy theory.  His conjecture states that there exists a uniform constant $c$ so that for an arbitrary set $u_1, \ldots, u_n$ of (Euclidean) unit vectors in $\R^d$, one may select a sequence of signs  $\eps_1, \ldots, \eps_n \in \{ \pm 1 \}$ so that
\[
\| \eps_1 u_1 + \ldots +\eps_n u_n \|_\infty \leq c.
\]
The currently strongest upper bound is due to Banaszczyk~\cite{B98}, who proved that a sum of $O(\sqrt{\log d})$ $\ell_\infty$-norm may always be found. Further vector balancing problems are listed in the survey of Bárány~\cite{B08}.

The other question which lies at  the centre of our attention is that of {\em subset sums}. Given a set of vectors $\om_n = \{u_1, \ldots, u_n \} \subset \R^d$ of norm at most 1 {\em which sum to 0,} and a fixed $k \leq n$, the goal is to find a subset $U_k$ of $\om_n$ of {\em cardinality $k$}, so that the sum of the vectors in $U_k$ has small norm. It follows from the result of Steinitz~\cite{S13} that there always exists a subset of size $k$ whose sum has norm at most $d$. In an article of the first named author written jointly with Bárány and Grinberg~\cite{ABG16}, it is proven that for general norms, the upper bound of $\lfloor d/2 \rfloor$ holds for arbitrary $k \leq n$, whereas for the Euclidean norm, the optimal upper bound is of the order of magnitude $\Theta(\sqrt{d})$. Swanepoel~\cite{S16} considered sets of {\em unit vectors} in general Banach spaces, all of whose $k$-element subset sums have small norm.

In all the above questions, the primary task is to find sums with {\em small norm}. In the present paper, we turn our interest to the reverse direction: we set the goal to  find {\em signed sums} which are {\em large}. For doing so, we will assume that all vectors of the family in question are of norm 1, since no nontrivial bound  may hold for vectors taken from the unit ball. Unifying the two main questions above, we are going to look for {\em large signed subset sums}. When the size of the sought-after subset equals to the number of the vectors, we reach the reverse of the original unit vector balancing problem.

We only consider the Euclidean case; analogous questions for general norms may be subject to future research.

Accordingly, let us introduce the following notion.

\begin{defi}\label{def_cdnk}
  For any $d \geq 1$, $n \geq 1$ and $1 \leq k \leq n$, let $c(d,n,k)$  be the largest value so that
for every set of unit vectors $\om_n = \{u_1,\dots,u_n\}\in\mathbb S^{d-1}$ there exist indices $1\leq i_1<\dots<i_k\leq n$ and corresponding signs $\eps_1,\dots,\eps_k\in\{\pm 1\}$ such that
\begin{equation}\label{defeq_cdnk}
  \left| \eps_1 u_{i_1}+\ldots+\eps_k u_{i_k} \right| \geq c(d,n,k).
\end{equation}
\end{defi}
Equivalently, $c(d,n,k) = \min \max |\sum_{j=1}^k \eps_j u_{i_j}|$, where minimum is taken over all $n$-element vector sets $\om_n = \{ u_1, \ldots, u_n \} \subset S^{d-1}$, while maximum is taken over all $k$-element subsets of $\om_n$ and all sign sequences $\eps \in \{ \pm 1 \}^n$.
That $c(d,n,k)$ exists follows by a usual compactness argument.

Our primary interest is to estimate the quantities $c(d,n,k)$, and  determine their exact value, whenever possible.

To begin with, we note that the triangle inequality implies the trivial upper bound
\begin{equation}\label{cdnk<=k}
  c(d, n, k) \leq k,
\end{equation}
which may only be sharp if $k=1$ or $d = 1$. Another simple observation is that if $1 \leq k_1 < k_2 \leq n$, then
\begin{equation}\label{k1k2}
  c(d, n, k_1) \leq c(d, n, k_2)
\end{equation}
since given a maximal $k_1$-term sum, any additional vector may be oriented in a way so that adding it does increase the norm of the sum.
Moreover, if $2\leq d_1<d_2$, then embedding $\R^{d_1}$ into $\R^{d_2}$ implies that  $c(d_2,n,k)\leq c(d_1,n,k)$. Obviously, we also have $c(d,n_1,k)\leq c(d,n_2,k)$ for $1\leq n_1<n_2$.


To obtain the simplest lower bound, we turn to an old tool. Signed sums of maximal norm appear in the argument of Bang~\cite{B51} used for the solution of Tarski's plank problem~\cite{T32}. The statement below, known as Bang's Lemma, is presented in the following form in~\cite{B01}.

\begin{proposition}[Bang]\label{bang}
If $u_1, \ldots, u_n$ are unit vectors in $\R^d$, and the signs $\eps_1, \ldots, \eps_n \in \{\pm 1\}$ are chosen so as to maximize the norm $| \sum_1^n \eps_i u_i|$, then
  \begin{equation}\label{ineq_bang}
   \la \eps_i u_i, \sum_1^n \eps_j u_j \ra  \geq 1
  \end{equation}
holds for every $i$.
\end{proposition}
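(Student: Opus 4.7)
The plan is to argue by contradiction via a single sign flip. Fix a sign sequence $(\eps_1,\dots,\eps_n)$ maximizing the norm of $v := \sum_{j=1}^n \eps_j u_j$, and suppose that for some index $i$ the inequality $\langle \eps_i u_i, v\rangle \geq 1$ fails, i.e. $\langle \eps_i u_i, v\rangle < 1$. I would then form the modified sequence in which only the $i$-th sign is flipped, and show that the resulting sum has strictly larger norm.

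The key computation is straightforward: writing $v' = v - 2\eps_i u_i$ for the flipped sum, expansion of the squared norm gives
\[
|v'|^2 = |v|^2 - 4\langle \eps_i u_i, v\rangle + 4|u_i|^2 = |v|^2 + 4\bigl(1 - \langle \eps_i u_i, v\rangle\bigr),
\]
using $|u_i|=1$. Under the assumption $\langle \eps_i u_i, v\rangle < 1$ the last term is strictly positive, so $|v'|>|v|$, contradicting the maximality of $(\eps_j)$. Therefore $\langle \eps_i u_i, v\rangle \geq 1$ holds for every $i$, which is exactly \eqref{ineq_bang}.

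I do not foresee a genuine obstacle: the statement is a discrete first-order optimality condition for the functional $\eps \mapsto |\sum_j \eps_j u_j|^2$ on the Boolean cube $\{\pm 1\}^n$, and the only ``perturbations'' available are single-coordinate flips. The only things to be careful about are that the perturbation preserves the unit-vector constraint (which is automatic, since the vectors themselves are not being changed), and that we are working with the squared norm rather than the norm, so that the comparison is linear-algebraic and does not require $v\neq 0$ to make sense. Since the maximum of $|\sum_j \eps_j u_j|$ over signs is always attained (the domain is finite), the argument is complete.
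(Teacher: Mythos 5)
Your proof is correct: the single sign-flip computation $|v-2\eps_i u_i|^2 = |v|^2 + 4\bigl(1-\langle \eps_i u_i, v\rangle\bigr)$ is exactly the classical argument for Bang's Lemma, and the maximality of $\eps$ over the finite cube $\{\pm 1\}^n$ gives the contradiction. The paper itself offers no proof---it quotes the statement from Ball's handbook chapter \cite{B01}---and your argument is precisely the standard one found there, so there is nothing to reconcile.
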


Summing \eqref{ineq_bang} over $i = 1, \ldots, n$, we readily obtain that
\begin{equation}\label{lower_sqrt_n}
\Big|\sum_1^n \eps_i u_i\Big| \geq \sqrt{n}.
\end{equation}
The same estimate may alternatively be shown by taking the average over all sign sequences:
\[
\frac 1 {2^n }\sum_{\eps \in \{ \pm 1\}^n} \Big| \sum_1^n \eps_i u_i \Big|^2 =  \sum_1^n |u_i|^2  =  n,
\]
since all the mixed terms $\la \eps_i u_i, \eps_j u_j \ra$ for $i \neq j$ cancel. Moreover, the argument also shows that \eqref{lower_sqrt_n} is sharp if and only of the vector system $\om_n$ is orthonormal.

Applying \eqref{lower_sqrt_n} to any subset of $\om_n$ of cardinality $k$, we reach the first nontrivial lower bound on $c(d,n,k)$:

\begin{proposition}\label{prop2}
  For arbitrary $d, n \geq 1$ and $1 \leq k \leq n$, $c(d,n,k) \geq \sqrt{k}$ holds. This estimate is sharp if and only if $n \leq d$.
\end{proposition}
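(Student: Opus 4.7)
My plan is to prove the lower bound by applying the averaging identity displayed in the excerpt, but to an arbitrary $k$-element subset of $\om_n$ rather than the full set. Fixing any indices $i_1<\dots<i_k$, summing $|\sum_{j=1}^k \eps_j u_{i_j}|^2$ over $\eps\in\{\pm 1\}^k$ and dividing by $2^k$ yields $k$, since the mixed terms cancel. Hence some sign sequence realizes a signed sum of squared norm at least $k$, proving $c(d,n,k)\geq\sqrt k$.

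For the sharpness characterization, the direction $n\leq d$ is witnessed by taking $\om_n$ to be an orthonormal set of size $n$ in $\R^d$: every $k$-subset is orthonormal, hence every signed sum has squared norm exactly $k$, so $c(d,n,k)\leq\sqrt k$ and equality holds.

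For the converse (assuming $k\geq 2$, the degenerate case $k=1$ being trivial), I would show that $n>d$ forces $c(d,n,k)>\sqrt k$. Given any $\om_n\subset S^{d-1}$ with $n>d$, linear algebra guarantees some pair $u_i, u_j$ with $\la u_i, u_j\ra\neq 0$, since $d$ bounds the size of any orthonormal family. Pick a $k$-subset $\{u_{i_1},\dots,u_{i_k}\}$ containing both and expand $|\sum_l \eps_l u_{i_l}|^2 = k + 2\sum_{l<m}\eps_l\eps_m \la u_{i_l}, u_{i_m}\ra$. The average of the cross-term sum over $\eps\in\{\pm 1\}^k$ is zero, and the key step is that the characters $\{\eps_l\eps_m\}_{l<m}$ are linearly independent functions on $\{\pm 1\}^k$, so the cross-term sum is not identically zero (the nonzero coefficient $\la u_i, u_j\ra$ prevents this). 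Hence some sign sequence makes it strictly positive, yielding a signed sum of norm strictly exceeding $\sqrt k$. Continuity of the max together with compactness of $(S^{d-1})^n$ then promotes this pointwise strict inequality to $c(d,n,k)>\sqrt k$.

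The only step with any subtlety is converting the averaging inequality into a \emph{strict} one, which rests on the Walsh-independence observation above; everything else is a direct extension of the averaging argument already spelled out in the excerpt.
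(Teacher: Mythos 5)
Your proof is correct and takes essentially the same route as the paper: the lower bound comes from averaging $\bigl|\sum_j \eps_j u_{i_j}\bigr|^2$ over sign sequences on each $k$-subset, and sharpness is reduced to every $k$-subset (hence $\om_n$ itself) being orthonormal, which forces $n \leq d$. Your Walsh-character independence and compactness steps merely make explicit what the paper leaves implicit in its remark that the averaging argument is sharp if and only if the system is orthonormal.
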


Indeed, for the estimate being sharp, we need every $k$-element subset of $\om_n$ to be orthonormal, which implies that $\om_n$ is orthonormal itself.

Among other results, we are going to give asymptotic estimates for $c(d,n,k)$. To formulate these, we will use the standard asymptotic notations $O(.), o(.), \Omega(.), \Theta(.)$ as defined in~\cite{K68}.

In the special case $k = n$, using the connection with the $p=1$ case of the $\ell_p$-polarization problem on the sphere $S^{d-1}$, the sharp asymptotic bound  was proved in~\cite{AN19}:

\begin{theorem}[A., Nietert~\cite{AN19}, Theorem 4] \label{thm_AN}
If $d, n \rightarrow \infty$ along with $n \geq d$, then
\[
c(d, n, n) = \Theta \left( \frac n {\sqrt{d}} \right).
\]
\end{theorem}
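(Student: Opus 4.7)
The plan is to exploit the duality
\[
\max_{\eps \in \{\pm 1\}^n} \Big|\sum_{i=1}^n \eps_i u_i\Big| \;=\; \max_{v \in S^{d-1}} \sum_{i=1}^n |\la v, u_i \ra|,
\]
which follows from $|w| = \max_{v \in S^{d-1}} \la v, w\ra$ together with the observation that the inner maximum over signs is attained at $\eps_i = \mathrm{sgn}\,\la u_i, v\ra$. Via this identification, $c(d,n,n)$ is the minimax value of the $\ell_1$-polarization functional $P_\om(v) := \sum_i |\la v, u_i \ra|$ over unit vector systems $\om = \{u_1, \ldots, u_n\} \subset S^{d-1}$, and both bounds in the theorem reduce to direct analyses of $P_\om$.

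For the lower bound, I would integrate $P_\om(v)$ against the normalized surface measure $\sigma$ on $S^{d-1}$. Rotational invariance gives
\[
\max_{v \in S^{d-1}} P_\om(v) \;\geq\; \int_{S^{d-1}} P_\om(v)\dd\sigma(v) \;=\; n\cdot J_d, \qquad J_d := \int_{S^{d-1}} |v_1|\dd\sigma(v).
\]
A Gamma-function computation---or, more transparently, the representation $v = g/|g|$ for $g$ a standard Gaussian vector in $\R^d$, combined with $E|g_1| = \sqrt{2/\pi}$ and concentration of $|g|$ around $\sqrt d$---yields $J_d = \Theta(1/\sqrt d)$, and hence $c(d,n,n) = \Omega(n/\sqrt d)$ uniformly in $\om$.

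For the matching upper bound, I would test a single explicit configuration. Set $q := \lceil n/d\rceil$ and let $\om$ consist of any $n$ vectors selected from the multiset containing $q$ copies of each basis vector $e_1, \ldots, e_d$. Since deleting vectors only decreases $P_\om$, and $\max_{v \in S^{d-1}} \sum_{j=1}^d |v_j| = \sqrt d$ by Cauchy--Schwarz,
\[
\max_{v \in S^{d-1}} P_\om(v) \;\leq\; q\sqrt d \;\leq\; \Big(\frac n d + 1\Big)\sqrt d \;\leq\; \frac{2n}{\sqrt d},
\]
the last step using $n \geq d$. This gives $c(d,n,n) = O(n/\sqrt d)$.

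The only truly conceptual step is the duality identity, which pivots the problem into $\ell_1$-polarization; once it is in hand, neither direction presents a genuine obstacle. The mildest technical point is extracting $J_d = \Theta(1/\sqrt d)$ uniformly in $d$, handled either by Stirling applied to the closed-form expression for $J_d$ or by the Gaussian representation above; a secondary bookkeeping issue is the case $d \nmid n$ in the upper bound, absorbed by rounding up $n/d$ and discarding superfluous vectors.
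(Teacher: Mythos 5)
Your proposal is correct and follows essentially the same route as the paper: the duality identity is exactly Proposition~3 of \cite{AN19} (equation~\eqref{eps_v} here), the lower bound by averaging $\sum_i |\la v, u_i\ra|$ over $\sigma$ is precisely the computation the paper carries out (yielding the constant $\sqrt{2/(\pi d)}$), and the upper bound via repeated copies of an orthonormal basis is the same construction the paper describes in the remark following the proof of Theorem~\ref{thm_general}. No gaps; the rounding argument for $d \nmid n$ and the bound $\max_{v\in S^{d-1}}\sum_j |v_j| = \sqrt{d}$ are handled correctly.
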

Therefore, the order of magnitude of $c(d,n,n)$ proves to be $\Theta(n)$ as opposed to the lower bound $\Omega(\sqrt{n})$ provided by Proposition~\ref{prop2}.

The next result determines the order of magnitude of $c(d,n,k)$ in the general case. 

\begin{theorem}\label{thm_general}
For arbitrary $d \geq 2$, $n\geq d$ and $k \geq 3$,
\begin{equation}\label{general_lower}
c(d,n,k) \geq \max \left \{  k - 8 k ^{\frac{d+1}{d-1}} n^{- \frac{2}{d-1}} , \sqrt{\frac 2 {\pi }} \cdot \frac 1 {\sqrt{d}} \cdot k  \right \}
\end{equation}
and
\begin{equation}\label{general_upper}
c(d,n,k) \leq \left\{\begin{array}{lr}
        k -  \alpha_1 \cdot \frac {1} {d^2}  \cdot k^\frac{d+1}{d-1}n^{-\frac{2}{d-1}} & \text{for } 3\leq k<  6 \cdot 100^{d-1}\\
        k - \alpha_2 \cdot  k^{\frac{d+1}{d-1}} n^{- \frac{2}{d-1}} & \text{for } 6 \cdot 100^{d-1} \leq k \leq n
               \end{array}\right.
\end{equation}
if $n$ is sufficiently large, where $\alpha_1, \alpha_2>0$ are absolute constants. Furthermore, for every $k > \frac 1 {\sqrt{d}} e^{ - \frac d 2 } \cdot n$,
\begin{equation}\label{general_upper_largek}
c(d,n,k) \leq  \frac  {4\varphi}{\sqrt{\pi}} \cdot \frac{1}{\sqrt{d} }\cdot k
\end{equation}
holds with $\varphi = \sqrt{W_0\left(\frac {n^2}{k^2}\right)}$,
where $W_0$ is the principal branch of the Lambert $W$ function; equivalently, $\varphi$ is the positive solution of the equation
\begin{equation}\label{phi_kn}
  \varphi \cdot \frac k n = e^{- \frac {\varphi^2}{2}}\,.
\end{equation}
\end{theorem}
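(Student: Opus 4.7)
The plan is to prove the four bounds separately, using distinct techniques.

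\textbf{Lower bounds in \eqref{general_lower}.} For the first term $k-8k^{(d+1)/(d-1)}n^{-2/(d-1)}$, I use a pigeonhole argument on spherical caps. Given $\om_n\subset S^{d-1}$, cover the sphere by $N=O(\theta^{-(d-1)})$ caps of angular radius $\theta$. Applying pigeonhole to the $2n$-element signed family $\{\pm u_i\}$ produces a cap $C(v,\theta)$ containing at least $k$ signed representatives $\eps_i u_i$ whenever $\theta\asymp (k/n)^{1/(d-1)}$, with an explicit absolute constant. Then $|\sum_{i\in S}\eps_i u_i|\geq \la\sum \eps_i u_i,v\ra \geq k\cos\theta\geq k(1-\theta^2/2)$, and optimizing the constant yields the factor $8$. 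For the second term $\sqrt{2/\pi}\cdot k/\sqrt d$, pick $v\in S^{d-1}$ uniformly and use the elementary inequality $\mathbb{E}_v|\la u,v\ra|\geq\sqrt{2/(\pi d)}$ valid for every unit $u$; then $\mathbb{E}_v\sum_i|\la u_i,v\ra|\geq n\sqrt{2/(\pi d)}$, so some $v^*$ realizes this sum. Selecting the $k$ indices $i_j$ with largest $|\la u_i,v^*\ra|$ and setting $\eps_j=\mathrm{sgn}\la u_{i_j},v^*\ra$, the top-$k$ sum is at least $k\sqrt{2/(\pi d)}$ (the top $k$ terms each exceed the average), so $|\sum\eps_j u_{i_j}|\geq\la\sum\eps_j u_{i_j},v^*\ra$ provides the bound.

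\textbf{Upper bound \eqref{general_upper}.} I construct an explicit $\om_n$: an optimally $\beta$-separated spherical code with $\beta\asymp n^{-1/(d-1)}$. For any $k$-subset $U$ and any sign sequence $\eps$, the $\beta$-separation forbids $U$ from lying in a cap of half-angle less than $\theta_0\asymp(k/n)^{1/(d-1)}$, since any such cap of radius $\rho$ admits at most $O((\rho/\beta)^{d-1})$ pairwise $\beta$-separated points. Writing $w=\sum\eps_i u_i$ with $|w|=M$, combining the spread lower bound with the Bang-type identity $M^2=\sum_i\la\eps_i u_i,w\ra$ and $\la\eps_i u_i,\hat w\ra\leq 1$ forces an angular deficit of order $\theta_0^2$ in the sum, producing $M\leq k-\alpha\cdot k\theta_0^2= k-\alpha\cdot k^{(d+1)/(d-1)}n^{-2/(d-1)}$. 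The split at $k=6\cdot100^{d-1}$ reflects the regime where $k$ is comparable to or smaller than dimension-dependent thresholds: there the cap-packing estimate carries extra $d$-dependent prefactors that must be absorbed explicitly, producing the $1/d^2$ factor in the small-$k$ bound.

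\textbf{Upper bound \eqref{general_upper_largek}.} For $k>ne^{-d/2}/\sqrt d$, the target bound $\tfrac{4\varphi}{\sqrt\pi}\cdot k/\sqrt d$ with $\varphi k/n=e^{-\varphi^2/2}$ points to a construction modelled on the uniform measure on $S^{d-1}$: take $\om_n$ to consist of $n$ unit vectors whose empirical distribution of $\la u_i,v\ra$ closely tracks the uniform marginal for every $v$. Since $\sqrt d\la u,v\ra$ is approximately standard Gaussian for uniform $u$, the condition $P(|\la u,v\ra|>\varphi/\sqrt d)\approx k/n$ becomes $\varphi e^{-\varphi^2/2}\asymp \sqrt d\cdot k/n$, which is exactly \eqref{phi_kn}. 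For any direction $v$ and any $k$ selected vectors, $\sum_{j=1}^k|\la u_{i_j},v\ra|$ is maximized by the top-$k$ order statistics; integrating the Gaussian tail above the threshold $\varphi/\sqrt d$ yields $\sum_{j=1}^k|\la u_{i_j},v\ra|\leq (4/\sqrt\pi)\varphi k/\sqrt d$, and a standard net plus union bound over $v\in S^{d-1}$ preserves the estimate uniformly, hence bounds the signed sum norm.

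\textbf{Main obstacles.} The chief difficulty is \eqref{general_upper}: one must exhibit a single configuration $\om_n$ controlling all $k$-subsets and all sign sequences simultaneously, and match the cap radii appearing in the lower bound to the separation constraints tightly enough to recover the power $k^{(d+1)/(d-1)}$ rather than just the $k$-free scaling $n^{-2/(d-1)}$. Tracking the packing constants carefully explains both the $1/d^2$ prefactor and the dichotomy at $6\cdot 100^{d-1}$. For \eqref{general_upper_largek} the main hurdle is calibrating the Gaussian order-statistic estimate against \eqref{phi_kn} to land precisely on the constant $4/\sqrt\pi$, and verifying that the uniformization over $v$ does not degrade this constant.
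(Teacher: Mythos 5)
Your plan follows the paper's own route on all four estimates: a dense-cap selection for the first lower bound, spherical averaging of $\sum_i|\la u_i,v\ra|$ for the second, a maximal $\delta$-separated spherical code with a cap-packing deficit argument for \eqref{general_upper}, and an equidistributed code with a tail integral split at the threshold $\varphi/\sqrt d$ for \eqref{general_upper_largek}. However, your key step for \eqref{general_upper} has a factor-$k$ gap as written. You argue that separation forbids the chosen $k$-set from lying in a cap of radius $\theta_0\asymp(k/n)^{1/(d-1)}$, and that this together with $\la\eps_iu_i,\hat w\ra\le 1$ ``forces an angular deficit of order $\theta_0^2$.'' That is correct but insufficient: if only one of the $k$ selected vectors is guaranteed to lie outside $C(\hat w,\theta_0)$, you only get $M\le(k-1)+\cos\theta_0\le k-c\,\theta_0^2$, i.e. $c(d,n,k)\le k-c\,k^{\frac{2}{d-1}}n^{-\frac{2}{d-1}}$, which is weaker than \eqref{general_upper} by a factor of $k$ in the correction term. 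What you need --- and what your own packing lemma delivers in one more line --- is the fraction version: choose $\rho\asymp(k/n)^{1/(d-1)}$ so that $C(\hat w,\rho)$ contains at most $\gamma k$ points of $\pm\om_n$ for an absolute constant $\gamma<1$; then at least $(1-\gamma)k$ of the selected terms have inner product at most $\cos\rho$, whence $M\le\gamma k+(1-\gamma)k\cos\rho\le k-c\,k\rho^2$. This is exactly how the paper proceeds (with $\gamma=\frac78$ and $\gamma=\frac23$ in the two ranges of $k$), and the comparison of $\rho$ with the separation $\delta$ is what forces the dichotomy: for $k<6\cdot100^{d-1}$ the admissible radius degrades to $\rho\asymp\delta k^{1/(d-1)}/d$ (since $(\gamma k)^{1/(d-1)}-1\asymp(\log k)/d$ there), which is the true source of the $1/d^2$ prefactor.

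Three smaller points. First, in the lower bound, replacing the discrete covering-plus-pigeonhole by averaging over all cap centers, $\int_{S^{d-1}}\#(\pm\om_n\cap C(x,r))\dd\sigma(x)=2n\,\sigma(C_r)$, avoids covering-density losses; recovering the specific constant $8$ uniformly in $d\ge2$ with a discrete covering would require tracking covering constants in each dimension, while the averaging argument sidesteps this entirely. Second, ``the top $k$ terms each exceed the average'' is false; what is true (and suffices) is that their mean exceeds the overall mean --- though no selection is needed at all, since applying the averaging identity to an arbitrary $k$-subset already yields $k\sqrt{2/(\pi d)}$. Third, your calibration ``$\varphi e^{-\varphi^2/2}\asymp\sqrt d\cdot k/n$'' is not \eqref{phi_kn}: the Gaussian tail heuristic gives $e^{-\varphi^2/2}\asymp\varphi k/n$, with no $\sqrt d$. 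Since the constant $4/\sqrt\pi$ is tied to the exact normalization, the computation must be run with $\varphi$ defined by \eqref{phi_kn}, as the paper does: split $\int_{\cos R_3}^1 t(1-t^2)^{\frac{d-3}{2}}\dd t$ at $t=\varphi/\sqrt d$ and use $\left(1-\frac{\varphi^2}{d}\right)^{\frac{d-1}{2}}<e^{-\varphi^2/2}$; also note that the existence of your ``uniformly tracking'' configuration needs justification --- the paper reuses the maximal separated code and cites equidistribution theorems rather than a net-and-union-bound construction.
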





Determining the extremal vector systems $\om_n$ may only be hoped for in a few special cases. We first discuss the case $k = 2$.  Since
\begin{equation}\label{norm_u+v}
|u + v|^2 = 2 + 2\la u,v \ra
\end{equation}
for unit vectors $u, v \in S^{d-1}$, bounding $c(d, n, 2)$ is equivalent to  estimating $\max_{1\leq i <j \leq n} |\la u_i, u_j \ra|$. This is a well known problem: the quantity $\max_{1\leq i <j \leq n} |\la u_i, u_j \ra|$ is called the {\em coherence} of the vector set $\om_n$, which is a special case of the {\em $p$-frame energies} \cite{BGMPV20+, CGGKO20}. Its minimizers are called {\em Grassmannian frames} and play an important role in \emph{equiangular spherical codes} \cite{bdks18}. The classical {\em Welch bound} \cite{W74} states that one can always select $i \neq j$ so that
\begin{equation}\label{welch}
  |\la u_i, u_j \ra| \geq \sqrt{\frac{n-d}{d(n-1)}}
\end{equation}
Recently, Bukh and Cox \cite{BC20+} improved this estimate when $d+2 \leq  n < d + O(d^2)$. For the $n = d+1$ case, \eqref{norm_u+v} and \eqref{welch} imply that $c(d, d+1, 2) \geq \sqrt{2 + 2/d}$. We provide two new proofs for this estimate:

\begin{theorem}\label{thm_k=2,n=d+1}
For every $d \geq 1$,
\begin{equation}\label{c_d,d+1,2}
  c(d, d+1, 2) = \sqrt{2 + \frac 2 d}\,.
\end{equation}

The sharp bound is attained if and only if, up to sign changes, the vector system $\om_{d+1}$ forms the vertex set of a regular $d$-dimensional simplex inscribed in $S^{d-1}$.
\end{theorem}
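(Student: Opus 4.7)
The first step is the well-known reduction: since $|u \pm v|^2 = 2 \pm 2\la u, v\ra$ for unit vectors, maximizing $|\eps_1 u_i + \eps_2 u_j|$ over signs amounts to picking the sign of $\la u_i, u_j \ra$. Thus, proving $c(d, d+1, 2) \geq \sqrt{2 + 2/d}$ reduces to showing that among any $d+1$ unit vectors in $\R^d$, some pair $u_i, u_j$ satisfies $|\la u_i, u_j \ra| \geq 1/d$, which is precisely the Welch bound~\eqref{welch} in the critical regime $n = d+1$.

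The approach I would take exploits the fact that $d+1$ vectors in $\R^d$ must be linearly dependent. Pick scalars $\lambda_1, \ldots, \lambda_{d+1}$, not all zero, with $\sum_i \lambda_i u_i = 0$, normalized so that $\sum_i \lambda_i^2 = 1$. Expanding the squared norm gives
\begin{equation*}
0 = \Big| \sum_i \lambda_i u_i \Big|^2 = 1 + 2\sum_{i<j} \lambda_i \lambda_j \la u_i, u_j \ra,
\end{equation*}
so $\sum_{i<j} \lambda_i \lambda_j \la u_i, u_j \ra = -1/2$. Setting $M := \max_{i \neq j} |\la u_i, u_j \ra|$ and combining the triangle inequality with Cauchy--Schwarz applied to the vector $(|\lambda_i|)$ against $(1,\ldots,1)$ gives
\begin{equation*}
\tfrac{1}{2} \leq M \sum_{i<j} |\lambda_i||\lambda_j| = \tfrac{M}{2}\Big( \big(\textstyle\sum_i |\lambda_i|\big)^2 - 1\Big) \leq \tfrac{M}{2} \big((d+1) - 1\big) = \tfrac{Md}{2},
\end{equation*}
which yields $M \geq 1/d$, as required. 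The matching upper bound is supplied by the regular simplex: its $d+1$ vertices on $S^{d-1}$ sum to $0$ and have all pairwise inner products equal to $-1/d$.

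For the uniqueness claim I would track the equality case backwards. If $M = 1/d$, then (i) Cauchy--Schwarz must be tight, so $|\lambda_i|$ is constant in $i$ and equal to $1/\sqrt{d+1}$; (ii) $|\la u_i, u_j \ra| = 1/d$ for every $i < j$; and (iii) every term $\lambda_i \lambda_j \la u_i, u_j \ra$ must be negative in order to saturate the triangle inequality, i.e.\ $\mathrm{sgn}\la u_i, u_j \ra = -\mathrm{sgn}(\lambda_i \lambda_j)$. Replacing each $u_i$ by $u_i' := \mathrm{sgn}(\lambda_i)\, u_i$ --- a sign flip that does not alter the signed-sum problem --- converts $\sum \lambda_i u_i = 0$ into $\sum_i u_i' = 0$ and the Gram matrix into the one with $-1/d$ off the diagonal, which characterizes the regular simplex inscribed in $S^{d-1}$.

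The main obstacle, such as it is, is bookkeeping rather than insight: one must combine the three equality conditions above (constant $|\lambda_i|$, constant $|\la u_i, u_j \ra|$, coherent signs) and perform the sign change to recognize the regular simplex. The estimate itself is short; the work is in checking that the coefficients $\lambda_i$ are forced to be uniform up to sign, which is the step where the simplex structure really crystallizes.
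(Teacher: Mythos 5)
Your proposal is correct and follows essentially the same route as the paper's first proof: exploit the linear dependence $\sum_i \lambda_i u_i = 0$, expand the squared norm, and bound $\sum_{i<j}|\lambda_i||\lambda_j|$ via Cauchy--Schwarz to get $M \geq 1/d$, with the equality analysis (constant $|\lambda_i|$, constant $|\la u_i, u_j\ra|$, coherent signs, then a sign flip to reach the regular simplex) matching the paper's, and in fact spelled out in slightly more detail. The paper additionally offers a second, independent proof by induction using Jung's theorem, but your argument is a complete and faithful counterpart of the first one.
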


We are also interested in the other end of the spectrum, when $n$ is much larger than $d$. In this case, the coherence bounds of Welch and Bukh-Cox are not sharp. Instead, we may turn to the {\em spherical cap packing problem}, which asks for finding the largest radius $R_{d,n}$ so that $n$ spherical caps of radius $R_{d,n}$ may be packed in $S^{d-1}$. For a detailed survey of that question, see Section 2.6 of \cite{B04}. Estimating $c(d, n, 2)$ is equivalent to bounding the packing density of non-overlapping  pairs of antipodal spherical caps of equal size. Using  volumetric estimates, we prove the following bounds for large values of  $n$:

\begin{theorem}\label{thm_k=2}
For each sufficiently large $d$, there exists an $N>0$, so that for every $n > N$,
\begin{equation}
2 - 0.51\, n^{-\frac{2}{d-1}} < c(d, n, 2) < 2 - 0.14 \, n^{-\frac{2}{d-1}}\,.
\end{equation}
\end{theorem}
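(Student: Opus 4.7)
The case $k=2$ reduces to a pure spherical-geometry question. For unit vectors $u,v\in S^{d-1}$ with $\la u,v\ra=\cos\theta$, one has $\max_{\eps=\pm 1}|u+\eps v|=\sqrt{2+2|\cos\theta|}=2\cos(\phi/2)$, where $\phi=\min(\theta,\pi-\theta)\in[0,\pi/2]$ is the angle between the lines $\mathrm{span}(u)$ and $\mathrm{span}(v)$. Writing $\alpha(\om_n):=\min_{i\neq j}\phi_{ij}$ for the minimum line-angle realized in $\om_n$, we obtain
\[
c(d,n,2) = 2\cos(\alpha^*/2), \qquad \alpha^* := \max_{\om_n \subset S^{d-1}} \alpha(\om_n),
\]
so $\alpha^*$ is the largest minimum angular separation of an antipodal $2n$-point code on $S^{d-1}$. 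Since $2\cos(x)=2-x^2+O(x^4)$, the target bounds are equivalent to a two-sided estimate $\alpha^*\asymp n^{-1/(d-1)}$ with sharp numerical constants.

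\textbf{Lower bound on $c$ (packing).} The $2n$ spherical caps of angular radius $\alpha^*/2$ centred at $\pm u_i$ are pairwise disjoint, yielding the fundamental packing inequality
\[
2n\,V_{d-1}(\alpha^*/2) \leq \om_{d-1}, \qquad V_{d-1}(r) = \om_{d-2}\int_0^r \sin^{d-2}\phi\dd\phi.
\]
Combining the estimate $\sin\phi\geq\phi(1-\phi^2/6)$ with Stirling's asymptotic $\om_{d-1}/\om_{d-2}\sim\sqrt{2\pi/(d-1)}$, and inverting, produces a quantitative upper bound on $\alpha^*$.

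\textbf{Upper bound on $c$ (greedy construction).} Starting from an arbitrary $u_1\in S^{d-1}$, iteratively select $u_{k+1}$ outside the forbidden union $\bigcup_{i\leq k}\{v\in S^{d-1}:\angle(v,\pm u_i)\leq\alpha\}$, which consists of $2k$ caps of angular radius $\alpha$. The procedure reaches $n$ vectors whenever $2(n-1)V_{d-1}(\alpha)<\om_{d-1}$; coupling this with the upper estimate $V_{d-1}(r)\leq \om_{d-2}r^{d-1}/(d-1)$ provides the matching lower bound on $\alpha^*$.

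\textbf{Main difficulty.} While both volumetric arguments are classical in spirit, extracting the specific constants $0.14$ and $0.51$ is delicate. Three sources of error must be controlled as $d,n$ grow: the $(\cdot)^{1/(d-1)}$ exponentiation of the Stirling prefactor $(d-1)\om_{d-1}/(2\om_{d-2})\sim\sqrt{\pi(d-1)/2}$; the $(1-O((\alpha^*)^2))^{d-2}$ correction from the Taylor expansion of $\sin$; and the quartic remainder in the expansion of $2\cos(\alpha^*/2)$ around $\alpha^*=0$. The assumption that $d$ be sufficiently large and $n>N(d)$ is used precisely to make each of these corrections small enough. The packing side is the more delicate one: to reach a leading constant of $0.51$ rather than one of order unity, the elementary volumetric estimate must be sharpened by a careful quantitative accounting of the above three terms throughout the chain of inequalities.
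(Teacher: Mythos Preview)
Your greedy construction for the upper bound $c(d,n,2)<2-0.14\,n^{-2/(d-1)}$ is correct and in fact cleaner than the paper's route.  The paper first takes a maximal $\delta_2$-separated set with $\delta_2=0.75\,n^{-1/(d-1)}$, then prunes near-antipodal pairs via a graph argument that bounds the maximum degree by the kissing number (Kabatiansky--Levenshtein) and extracts an independent set by Brooks' theorem.  Your greedy avoids the antipodes from the start; the condition $2(n-1)\sigma(C_\alpha)<1$ with $\sigma(C_\alpha)\lesssim\tfrac{1}{\sqrt{2\pi d}}\alpha^{d-1}$ yields $\alpha^*\geq(1-o(1))\,n^{-1/(d-1)}$, hence $c(d,n,2)<2-(\tfrac14-o(1))\,n^{-2/(d-1)}$, comfortably below $2-0.14\,n^{-2/(d-1)}$.

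The lower bound, however, has a genuine gap.  The trivial volumetric packing inequality $2n\,\sigma(C_{\alpha^*/2})\leq 1$, combined with $\sigma(C_r)\geq\tfrac{\kappa_{d-1}}{d\kappa_d}\sin^{d-1}r$, gives only
\[
\alpha^*\leq\bigl(2+o(1)\bigr)\,n^{-1/(d-1)},\qquad\text{so}\qquad c(d,n,2)>2-\bigl(1+o(1)\bigr)\,n^{-2/(d-1)}.
\]
The three correction factors you list---the Stirling prefactor $(\sqrt{\pi(d-1)/2})^{1/(d-1)}$, the $(1-O(\alpha^{*2}))^{d-2}$ term from $\sin\phi\approx\phi$, and the quartic remainder in $2\cos(\alpha^*/2)$---are each $(1+o(1))$ as $d\to\infty$ resp.\ $n\to\infty$, so no bookkeeping of these terms can move the leading constant from $1$ to $0.51$; the Stirling prefactor in fact exceeds $1$ and pushes the bound the wrong way.

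What is actually needed is a sphere-packing bound that beats the volumetric one by an exponential factor in $d$.  The paper invokes such a result (Theorem~6.3.1 of B\"or\"oczky, \emph{Finite packing and covering}):
\[
|X|<23\,(d-1)^{3/2}\,\sin^{-(d-1)}(\delta/2)\cdot 2^{-(d-1)/2}
\]
for every $\delta$-separated $X\subset S^{d-1}$.  The extra factor $2^{-(d-1)/2}$ is the whole point: it improves the bound on $\alpha^*$ from $2\,n^{-1/(d-1)}$ to $\sqrt{2}\,(1+o(1))\,n^{-1/(d-1)}$, and hence the constant in front of $n^{-2/(d-1)}$ from $1$ to $\tfrac12$, which is what the $0.51$ encodes.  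Your outline omits this ingredient.
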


Next, we study the problem in the plane. Unlike in most of the higher dimensional cases, here we are able to derive sharp bounds.

\begin{theorem}\label{thm_planarbound}
The following lower bounds hold in the plane:
\begin{equation}\label{planarbound_even}
  c(2, n, k) \geq k \cos \frac {(k-1) \pi} {2 n}
\end{equation}
for even values of $k$, and
\begin{equation}\label{planarbound_odd}
  c(2, n, k) \geq \sqrt{1 + (k-1)(k+1) \cos^2 \frac {(k-1) \pi} {2 n} }
\end{equation}
when $k$ is odd. These bounds are sharp if and only if $n$ is divisible by $(k-1)$. In those cases, equality is attained if and only if $\{ \pm u_1, \ldots, \pm u_n \}$ forms the vertex set of a regular $\frac{2n}{k-1}$-gon inscribed in $S^1$, each vertex taken with multiplicity $k-1$.
\end{theorem}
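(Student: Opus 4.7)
The natural approach is a pigeonhole argument on the projective line followed by a Lagrangian analysis of the resulting signed sum. Identifying each $u_i$ with its direction modulo sign, the $n$ points lie on the circle $S^1/\{\pm 1\}$ of circumference $\pi$; their cyclic gaps sum to $\pi$, so by averaging there exist $k$ consecutive points spanning an arc of length $\alpha \leq (k-1)\pi/n$. Choose the corresponding vectors $u_{i_1}, \ldots, u_{i_k}$ and signs $\eps_j \in \{\pm 1\}$ so that $\eps_j u_{i_j}$ all lie within a common arc of length $\alpha$ on $S^1$; after a rotation we may write $\eps_j u_{i_j} = e^{i\phi_j}$ with $\phi_j \in [-\alpha/2, \alpha/2]$.

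Set $V = \sum_j \eps_j u_{i_j}$ and $F(\phi_1, \ldots, \phi_k) = |V|^2 = k + 2\sum_{j<\ell}\cos(\phi_j - \phi_\ell)$. A direct computation gives $\partial F/\partial \phi_j$ proportional to $\mathrm{Im}(e^{i\phi_j}\overline{V})$ and $\partial^2 F/\partial \phi_j^2$ proportional to $-\mathrm{Re}(e^{i\phi_j}\overline{V})$; at any critical point the condition that $e^{i\phi_j}$ be parallel to $V$ makes the latter equal to $-2|V| < 0$. Hence every critical point of $F$ in the interior of a face of the cube $[-\alpha/2, \alpha/2]^k$ is a strict local maximum in each free coordinate, and iterating the argument the minimum of $F$ is attained at a vertex. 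With $a$ coordinates equal to $-\alpha/2$ and $b = k-a$ equal to $+\alpha/2$, one has $V = (a+b)\cos(\alpha/2) + i(b-a)\sin(\alpha/2)$, so
\[
F = (a-b)^2 \sin^2\tfrac{\alpha}{2} + k^2 \cos^2\tfrac{\alpha}{2}.
\]
Since $a-b$ has the parity of $k$, minimising over $a+b = k$ yields $(a-b)^2 = 0$ when $k$ is even (so $F \geq k^2\cos^2(\alpha/2)$) and $(a-b)^2 = 1$ when $k$ is odd (so $F \geq 1 + (k^2-1)\cos^2(\alpha/2)$). The monotonicity of $\cos^2(\alpha/2)$ in $\alpha$ together with $\alpha \leq (k-1)\pi/n$ then gives exactly the lower bounds \eqref{planarbound_even} and \eqref{planarbound_odd}.

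For the sharpness claim when $(k-1) \mid n$, I would take $\omega_n$ to realise the vertex set of a regular $\frac{2n}{k-1}$-gon inscribed in $S^1$, each vertex with multiplicity $k-1$. Any $k$-subset, after optimising the signs within each of the $m = n/(k-1)$ directions, reduces to $\sum_{j=1}^m r_j \eps_j v_j$ with $\sum r_j = k$ and $0 \leq r_j \leq k-1$, where $v_1, \ldots, v_m$ are the $m$ directions in $S^1/\{\pm 1\}$. The main obstacle is the combinatorial verification that over all admissible multiplicity vectors $(r_j)$ and sign patterns the resulting norm is maximised precisely at the split concentrated on two adjacent directions matching the extremiser of the Lagrangian analysis above, and that this maximum equals the stated bound. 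The ``only if'' direction of the uniqueness claim then follows by tracing back the equality conditions through both the pigeonhole estimate and the Lagrangian argument, forcing the cyclic gaps in $S^1/\{\pm 1\}$ to be constant and the $k$-subset achieving equality to sit in the extremal $(\lceil k/2 \rceil, \lfloor k/2 \rfloor)$ configuration at the two endpoints of an arc of length exactly $(k-1)\pi/n$.
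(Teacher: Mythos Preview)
Your lower-bound argument follows the same two-step outline as the paper: a pigeonhole estimate on the symmetrised set $\pm\omega_n$ to find $k$ vectors in an arc of angle at most $(k-1)\pi/n$, and then a minimisation of $|\sum e^{i\phi_j}|$ over $\phi_j$ in that arc. The paper packages the second step as a separate lemma and proves it more directly---for even $k$ simply by projecting onto the midpoint direction, and for odd $k$ by observing that replacing any $u_j$ on the ``wrong side'' of the current sum direction by the nearer endpoint of the arc can only decrease $|V|$, so the minimum is reached when the vectors sit at the two endpoints with a $\lfloor k/2\rfloor$--$\lceil k/2\rceil$ split. Your Lagrangian route reaches the same vertex configuration, but note a small slip: $\partial^2 F/\partial\phi_j^2 = -2\sum_{\ell\neq j}\cos(\phi_j-\phi_\ell) = -2\,\mathrm{Re}(e^{i\phi_j}\overline V)+2$, not $-2\,\mathrm{Re}(e^{i\phi_j}\overline V)$. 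At a critical point parallel to $V$ this gives $-2|V|+2$, which is still negative because $|V|\geq k\cos(\alpha/2)>1$ here, so your conclusion survives.

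Where your write-up falls short is the sharpness discussion. You identify the right extremal configuration but then declare the verification that its maximal signed $k$-subset sum equals the bound to be ``the main obstacle'' and stop. The paper handles the equality characterisation the other way round: it traces the equality conditions back through the pigeonhole step (forcing all $\beta_i=(k-1)\pi/n$) and through the arc lemma (forcing the $k$ chosen vectors to sit at the two arc endpoints with the correct split), and these two constraints together force $\pm\omega_n$ to be the vertex set of a regular $\tfrac{2n}{k-1}$-gon with multiplicity $k-1$; in particular $(k-1)\mid n$ is necessary. This ``only if'' analysis is the substantive part, and it is missing from your sketch. The remaining ``if'' verification---that for this configuration no signed $k$-subset sum exceeds the stated value---is then a short check (any $k$ of the $2n$ points again lie in an arc of length exactly $(k-1)\pi/n$, so the arc lemma applies with equality possible), which the paper also dispatches in one line; it is not the main obstacle you make it out to be.
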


We finish our discussion with two further special cases. First, when $k =n = d$, Proposition~\ref{prop2} shows that extremizers are exactly the orthonormal systems. Second, for $ k = n = d+1$, natural intuition and numerical experiments suggest that each extremal configuration is, up to sign changes, the union of the vertex set of an even dimensional regular simplex and an orthonormal basis of the orthogonal complement of its subspace. The following conjecture,
already proven for $d=2$ in \cite[Thm. 1.3]{BFGK18}, is from \cite{BFGK18} and \cite{P19}.
Notice that $c(d,n,n)$ was considered in \cite{BFGK18} as an application to understand the behavior of the circumradius with
respect to the Minkowski addition of $n$ centrally symmetric sets in $\mathbb R^d$.

\begin{conj}\label{simplexconj}
For any $d \geq 1$, $c(d, d+1, d+1) = \sqrt{d+2}$. The sharp bound is realized if and only if, up to sign changes, $\om_{d+1}$ is the union of the vertex set of a regular simplex in a subspace $H$ centered at the origin, and an orthonormal basis of $H^\perp$, where $H$ is an even dimensional linear subspace of $\R^d$.
\end{conj}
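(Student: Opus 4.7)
The upper bound $c(d,d+1,d+1) \leq \sqrt{d+2}$ comes from a direct computation on the proposed extremal configuration. Fix an even integer $2m$ with $0 \leq 2m \leq d$, take the vertex set $\{v_1, \ldots, v_{2m+1}\}$ of a regular $2m$-simplex inscribed in the unit sphere of a $2m$-dimensional subspace $H \subset \R^d$ (so $\la v_i, v_j\ra = -1/(2m)$ for $i \neq j$), and append an orthonormal basis $\{e_1, \ldots, e_{d-2m}\}$ of $H^\perp$. Since $H$ and $H^\perp$ are orthogonal, the two blocks of signs may be optimized independently: on the simplex, $|\sum \eps_i v_i|^2 = \bigl((2m+1)^2 - (\sum \eps_i)^2\bigr)/(2m)$ is maximized at $|\sum \eps_i| = 1$ (possible since $2m+1$ is odd) with value $2m+2$; on the basis the maximum squared norm is $d-2m$. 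These combine to $d+2$.

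For the lower bound, let $\om_{d+1} = \{u_1, \ldots, u_{d+1}\} \subset S^{d-1}$ be arbitrary and let $s = \sum \eps_i u_i$ be a signed sum of maximum norm; replacing each $u_i$ by $\eps_i u_i$ we assume $\eps_i = +1$. Setting $r_i = \la u_i, s\ra$, Bang's Lemma~\ref{bang} gives $r_i \geq 1$ and $|s|^2 = \sum r_i \geq d+1$. The stronger inequalities $|\sum_{i \in T} u_i|^2 \leq \sum_{i \in T} r_i$ obtained by flipping the signs on any subset $T$ specialize, for $|T|=2$, to $\la u_i, u_j\ra \leq (r_i + r_j)/2 - 1$. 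Since the $u_i$ are $d+1$ vectors in $\R^d$ they satisfy a nontrivial relation $\sum \nu_i u_i = 0$, whose pairing with $s$ yields $\sum \nu_i r_i = 0$. The plan is to induct on $d$, with base case $d=2$ from \cite[Thm.~1.3]{BFGK18}. If some coordinate $\nu_j$ vanishes, the remaining $d$ vectors are linearly dependent and hence lie in a $(d-1)$-dimensional subspace; the inductive hypothesis $c(d-1,d,d) = \sqrt{d+1}$ furnishes signs giving them a sum $s'$ with $|s'|^2 \geq d+1$, and appending $\pm u_j$ with the sign making $\la s', u_j\ra \geq 0$ yields $|s' \pm u_j|^2 \geq |s'|^2 + 1 \geq d+2$. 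This case already contains the extremizers with $m < d/2$.

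The main obstacle is the complementary case where every $\nu_i$ is nonzero, i.e., every $d$-element subfamily of $\om_{d+1}$ is a basis of $\R^d$; here the pure regular simplex ($d$ even, $m=d/2$) is itself an extremizer, so one must establish $|s|^2 \geq d+2$ with equality exactly at that simplex. The attack is to project onto $s^\perp \cong \R^{d-1}$: writing $u_i = (r_i/|s|^2)\, s + w_i$ gives $\sum w_i = 0$, $|w_i|^2 = 1 - r_i^2/|s|^2$, and the Gram matrix of the $w_i$ has rank at most $d-1$. Assuming toward contradiction $|s|^2 < d+2$, the pairwise flip inequalities become bounds on $\la w_i, w_j\ra$, which combined with $\sum w_i = 0$ and the rank constraint should force rigidity. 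This is already transparent in the subcase $r_i \equiv 1$ (i.e.\ $|s|^2 = d+1$): summing the pairwise bounds forces $\la w_i, w_j\ra/(|w_i||w_j|) = -1/d$ for all $i \neq j$, yielding a Gram matrix of rank $d$ in a $(d-1)$-dimensional space, a contradiction. Propagating this rigidity across the full interval $|s|^2 \in [d+1, d+2)$, most likely by combining the higher-$|T|$ flip inequalities with the dual identity $\sum \nu_i r_i = 0$, is the crux of the problem. The equality characterization then follows by tracking saturating configurations: tightness in the induction step singles out the simplex-plus-basis extremizers with $m < d/2$, while tightness in the case-2 analysis should isolate the pure regular simplex as the sole $m = d/2$ extremizer.
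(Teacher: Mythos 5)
You should first be clear about the status of this statement in the paper: it is Conjecture~\ref{simplexconj}, and the paper does \emph{not} prove it. The only result proved there is Theorem~\ref{thm_simplex}: the lower bound $|\eps_1 u_1 + \dots + \eps_{d+1} u_{d+1}| \geq \sqrt{d+2}$ under the two \emph{extra} hypotheses that $d$ is even and $\sum_{i=1}^{d+1} u_i = 0$. That proof is short: with $u = \sum_i \eps_i u_i$ of maximal norm, Proposition~\ref{bang} gives $\la \eps_i u_i, u \ra \geq 1$, hence $\la (\eps_i+1)u_i, u\ra \geq 1+\eps_i$ for all $i$; since $\sum_i u_i = 0$ one also has $u = \sum_i(\eps_i+1)u_i$, so $|u|^2 \geq (d+1) + \sum_i \eps_i$, and because $d+1$ is odd the global sign can be normalized so that $\sum_i \eps_i \geq 1$, giving $|u|^2 \geq d+2$. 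So the question is whether your attempt goes beyond this partial result and settles the conjecture. It does not: the gap in your argument is precisely the open problem.

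Several pieces of your proposal are correct: the upper-bound computation on the simplex-plus-basis configurations (valid for $d \geq 2$; for $d=1$ the construction degenerates and in fact $c(1,2,2)=2$, so the conjectured equality must be read with $d \geq 2$); the flip inequalities $|\sum_{i \in T} u_i|^2 \leq \sum_{i\in T} r_i$; the identity $\sum_i \nu_i r_i = 0$; the rank argument excluding $|s|^2 = d+1$; and the reduction of the ``some $\nu_j = 0$'' case to dimension $d-1$ (though note that this makes your scheme conditional on an inductive hypothesis which is itself the unproven conjecture, established only for $d-1 = 2$, so the induction already stalls at $d=3$). The genuine gap is the case you yourself call the crux: all $\nu_i \neq 0$ and $d+1 < |s|^2 < d+2$. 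There you offer only the hope that the flip inequalities together with $\sum_i \nu_i r_i = 0$ ``should force rigidity''; no argument is given, and the mechanism you exhibited at $|s|^2 = d+1$ does not extend. That mechanism relied on the pairwise bounds $\la w_i, w_j \ra \leq -1/(d+1)$ being forced into equality by $\sum_i w_i = 0$; once some $r_i > 1$, each bound acquires slack of order $(r_i - 1) + (r_j - 1)$, the sum rule no longer pins down the Gram matrix, and the rank obstruction evaporates. For what it is worth, Theorem~\ref{thm_simplex} of the paper settles exactly the subcase of your main case in which the relation coefficients can be taken to be $\nu_i = \pm 1$ (i.e., the vectors sum to zero up to sign changes) and $d$ is even: in your notation, $\sum_i \nu_i r_i = 0$ with all $r_i \geq 1$ forces $|s|^2 = 2\sum_{\nu_i = +1} r_i = 2\sum_{\nu_i = -1} r_i \geq 2\max(p,q) \geq d+2$, where $p$ and $q$ count the positive and negative $\nu_i$ and $p+q = d+1$ is odd. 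You could absorb this parity argument into your case analysis, but for general nonzero coefficients $\nu_i$, and for odd $d$, nothing of the sort is known --- which is exactly why the statement is a conjecture rather than a theorem.
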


We conclude the article with a proof of this bound under special assumptions (the problem was also posted in \cite{ol18}).

\begin{theorem}\label{thm_simplex}
Assume that $d$ is even, and the unit vectors $u_1, \ldots, u_{d+1} \in S^{d-1}$ satisfy $\sum_{i=1}^{d+1} u_i =0$. Then there exist signs $\eps_1, \ldots, \eps_{d+1} \in \{ \pm 1 \}$ so that
\begin{equation}\label{eq_simplex}
|\eps_1 u_1 + \ldots + \eps_{d+1} u_{d+1}| \geq \sqrt{d+2}.
\end{equation}
\end{theorem}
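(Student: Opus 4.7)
The plan is to use the hypothesis $\sum_i u_i=0$ to convert the signed-sum problem into a subset-sum problem, and then run a short averaging argument over subsets of the right size. Writing $A=\{i:\eps_i=+1\}$ and invoking $\sum u_i = 0$, one has
\[
\sum_{i=1}^{d+1}\eps_i u_i \;=\; \sum_{i\in A}u_i - \sum_{i\notin A}u_i \;=\; 2\sum_{i\in A}u_i,
\]
so it suffices to exhibit a subset $A\subseteq\{1,\dots,d+1\}$ with $\bigl|\sum_{i\in A}u_i\bigr|^2 \geq (d+2)/4$.

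I would then average the quantity $f(A) := \bigl|\sum_{i\in A}u_i\bigr|^2$ over all $A$ of a fixed cardinality $|A|=k$. Expanding $f(A)$ and using the identity $2\sum_{i<j}\langle u_i,u_j\rangle = \bigl|\sum u_i\bigr|^2 - \sum|u_i|^2 = -(d+1)$, which is immediate from the hypothesis, a routine count of how often each index and each pair appears in a $k$-subset gives
\[
\operatorname{avg}_{|A|=k} f(A) \;=\; \frac{k(d+1-k)}{d}.
\]

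The last step is to pick $k$ so that this average reaches $(d+2)/4$. The quantity $k(d+1-k)/d$ is maximized in $k$ at $k=(d+1)/2$; since $d$ is even, I would take $k=d/2$, for which the average equals exactly $(d/2)(d/2+1)/d = (d+2)/4$. Hence some $A$ with $|A|=d/2$ satisfies $f(A)\geq (d+2)/4$, and the corresponding signed sum has norm at least $\sqrt{d+2}$.

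I do not foresee a serious obstacle, as the argument is a clean averaging trick; the only delicate point is the choice of $k$. The parity hypothesis enters precisely here: for $d$ odd, $k=(d+1)/2$ is already an integer and the same computation would give the stronger bound $(d+1)/\sqrt{d}>\sqrt{d+2}$, so $d$ even is what makes $\sqrt{d+2}$ the tight value (with equality realized by a regular simplex, matching Conjecture~\ref{simplexconj}). The assumption $\sum u_i=0$ is used twice, first to pass from signed sums to subset sums (gaining the factor of $2$), and then to evaluate $\sum_{i<j}\langle u_i,u_j\rangle$ explicitly inside the averaging formula.
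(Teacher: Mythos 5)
Your proof is correct, but it takes a genuinely different route from the paper's. The paper argues variationally: it takes a sign sequence $\eps$ maximizing $|u|$ with $u=\sum_i\eps_iu_i$, invokes Bang's Lemma (Proposition~\ref{bang}) to get $\la \eps_iu_i,u\ra\geq 1$ for every $i$, uses evenness of $d$ to arrange $\sum_i\eps_i\geq 1$ after a global sign flip (the sum of an odd number of $\pm1$'s is odd), and then uses $\sum_iu_i=0$ to rewrite $u=\sum_i(\eps_i+1)u_i$, so that summing Bang's inequalities yields $|u|^2\geq (d+1)+\sum_i\eps_i\geq d+2$. You instead reduce to subset sums via the same zero-sum hypothesis and run a first-moment argument over subsets of fixed cardinality $k=d/2$: your average formula $k(d+1-k)/d$ is correct (each pair lies in a $k$-subset with probability $k(k-1)/(d(d+1))$, and $2\sum_{i<j}\la u_i,u_j\ra=-(d+1)$), and it equals exactly $(d+2)/4$ at $k=d/2$, which closes the argument. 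Your approach buys several things: it is self-contained (no appeal to Bang's Lemma), it specifies the sign pattern ($d/2$ plus signs against $d/2+1$ minus signs), it gives the stronger bound $(d+1)/\sqrt{d}$ for odd $d$ under the same hypothesis, and --- since the maximum is compared to an exact average --- equality in \eqref{eq_simplex} would force every $d/2$-subset sum to have norm exactly $\sqrt{d+2}/2$, a rigidity statement pointing toward the equality characterization in Conjecture~\ref{simplexconj}. What the paper's variational argument buys is that Bang's Lemma needs no zero-sum hypothesis (it is the same tool behind the general bound \eqref{lower_sqrt_n}) and so fits the paper's broader machinery; as proofs of this particular theorem, both are equally short and complete.
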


\section{General asymptotic estimates}

In this section we are going to use the notions of $\delta$-nets and $\delta$-separated sets in $S^{d-1}$. By distance we will mean  the spherical (geodesic) distance on $S^{d-1}$. Also, for $x \in S^{d-1}$ and $r \in [0,\pi]$, let $C(x,r)$ denote the spherical cap of $S^{d-1}$ with centre $x$ and radius $r$.
The volume of the unit ball $B^d$, denoted as $\kappa_d$, is given by
\[
\kappa_d = \frac{\pi^{d/2}}{\Gamma(\frac d 2 + 1)}
\]
(see e.g.  \cite{B97}). The surface area of $S^{d-1}$ is $d \,\kappa_d$. Let  $\sigma$ stand for the normalized surface area measure on $S^{d-1}$ (thus, $\sigma(S^{d-1}) = 1$).

Let $C_r$ denote a spherical cap of $S^{d-1}$ of radius $r$: that is, $C_r = C(x,r)$ for an arbitrary $x \in S^{d-1}$. By  projecting a cap radially to its boundary hyperplane and to the tangent hyperplane at its center, respectively,  we derive the simple estimates
\begin{equation}\label{sigma_C_R}
\frac {\kappa_{d-1}}{d \,\kappa_d} \sin^{d-1} r\leq \sigma(C_r) \leq \frac {\kappa_{d-1}}{d \,\kappa_d} \tan^{d-1} r.
\end{equation}
We note for later reference that Gautschi's inequality implies that
\begin{equation}\label{kappa_ineq}
 \frac{1}{\sqrt{2 \pi d}} < \frac {\kappa_{d-1}}{d \,\kappa_d} = \frac{\Gamma(\frac d 2 + 1)}{d \sqrt{\pi} \Gamma (\frac{d+1}{2})}  < \frac{1}{\sqrt{2 \pi}} \frac {\sqrt{d+2}}{d}
\end{equation}
for every $d$.

%

%


As usual, a set $X \subset S^{d-1}$ is called a {\em $\delta$-net}, if for arbitrary $y \in S^{d-1}$ there exists $x \in X$ so that the spherical distance between $x$ and $y$ is at most $\delta$. Equivalently, the spherical caps of radii $\delta$ centred at the points of $X$ completely cover $S^{d-1}$. A set $Y \subset S^{d-1}$ is {\em $\delta$-separated}, if the distance between any two of its points is at least $\delta$ -- equivalently, the spherical caps of radius $\delta/2$ centred at the points of $Y$ are pairwise non-overlapping. Furthermore, $Y$ is a {\em maximal $\delta$-separated set}, if appending any point of $S^{d-1} \setminus Y$ to it results in losing $\delta$-separatedness.  It is well known that maximal $\delta$-separated sets are (minimal) $\delta$-nets. Moreover, if $X\subset S^{d-1}$ is a maximal $\delta$-separated set, then $ |X|=\Theta(\delta^{-(d-1)}).$
This is implied by e.g. Theorem 6.3.1 of \cite{B04}, which states that if $X$ is a maximal $\delta$-separated set with $\delta < \pi /2$, then
\begin{equation}\label{card_dsep}
  \sqrt{2 \pi} \sin^{-(d-1)}\delta < |X| < 23 \, (d-1)^{3/2 } \sin^{-(d-1)} \frac \delta 2 \cdot 2^{-(d-1)/2} \,.
\end{equation}

\begin{proof}[Proof of Theorem~\ref{thm_general}]
First, we set off for the lower bound. We consider two cases depending on whether $k = \Theta(n)$ or $k = o(n)$. Note that the two terms in the estimate \eqref{general_lower} are equal when
\[
k = \left( 1 - \sqrt{\frac 2 {\pi d}} \right)^{\frac{d-1}2} \frac  n 8  \approx \frac{e^{- \sqrt{\frac{d}{2 \pi}}}}{8}  n \,;
\]
for smaller values of $k$, the first term dominates, while for larger $k$'s, the second term is larger.

Throughout the proof,  $\om_n = \{ u_1, \ldots, u_n \} \subset S^{d-1}$ will be an arbitrary $n$-element unit vector set. We will also use the notation $\pm \om_n = \{ u_1, -u_1, \ldots, u_n, -u_n \}.$

When $k$ is large, we apply the method of \cite{AN19} for proving Theorem~\ref{thm_AN}. Take $k$ vectors of $\om_n$ arbitrarily, say, $u_1, \ldots, u_k$. Then (see Proposition 3 of \cite{AN19})
\begin{equation}\label{eps_v}
  \max_{\eps \in \{ \pm 1 \}^k} \Big| \sum_{i=1}^k \eps_i u_i \Big| = \max_{v \in S^{d-1}} \sum_{i=1}^k |\la v, u_i \ra| \,.
\end{equation}
The quantity on the right hand side is easy to estimate:
\begin{align*}
\max_{v \in S^{d-1}} \sum_{i=1}^k |\la v, u_i \ra| & \geq \int_{S^{d-1}}  \sum_{i=1}^k |\la v, u_i \ra| \dd \sigma(v) \\
& = k \int_{S^{d-1}} |\la v, u_1 \ra| \dd \sigma(v)\\
&= k \frac{2}{d \kappa_d} \int_0^1 t (1 - t^2)^{\frac{d-3}{2}} (d-1) \kappa_{d-1} \dd t\\
& = k \frac{2 \kappa_{d-1}}{d \kappa_d} \\
& \geq k \sqrt{\frac{2}{\pi d}},
\end{align*}
which is the second term of the estimate in \eqref{general_lower}.

Next, we establish the estimate for small $k$'s. Let $r$ so that
  \[
  \sigma(C_r) = \frac {k}{2n}.
  \]
By \eqref{sigma_C_R} and \eqref{kappa_ineq},
\[
\frac {1}{\sqrt{2 \pi d}} \sin^{d-1} r < \frac k {2n}\,.
\]
Using that on $[0, \pi/2]$, $ \frac 2 \pi x \leq  \sin x$,  this implies that
\[
r < 4 \Big( \frac k n \Big)^{\frac{1}{d-1}}\,.
\]
Accordingly, for any $x \in S^{d-1}$ and for every $y \in C(x, r)$, we have
\begin{equation}\label{xyest}
  \la x, y  \ra \geq \cos r > 1 - 8 \Big( \frac k n \Big)^{\frac 2 {d-1}}.
\end{equation}

Let us denote by $\#(X)$ the cardinality of a finite set $X\subset\mathbb R^d$. Since
\[
 \int_{S^{d-1}} \#( (\pm \om_n) \cap C(x,r)) \dd \sigma(x) = \sum_{u \in \pm \om_n} \sigma(\{ x \in S^{d-1}: \ u \in C(x, r) \}) = 2n \cdot \frac{k}{2n} = k,
\]
there exists some $x \in S^{d-1}$ for which at least $k$ vectors of $\pm \om_n$ lie in $C(x,r)$. Let $v_1, \ldots, v_k \in \pm \om_n$ be $k$ such vectors. Then, by \eqref{xyest},
\begin{equation}\label{sumv}
  |v_1 + \ldots + v_k| \geq \sum_{i=1}^k \la x, v_i \ra \geq k  - 8 k^{\frac{d+1}{d-1}} n^{- \frac{2}{d-1}}\,,
\end{equation}
which is the first term of desired lower bound.

Now we turn to the upper bounds. First, we show \eqref{general_upper}. Take $\om_n$ to be a $\delta$-separated set of $n$ points in $S^{d-1}$ with $\delta$ being as large as possible. By \eqref{card_dsep}, \begin{equation}\label{deltaest}
  n^{-\frac{1}{d-1}}<\delta <  \, 33 n^{-\frac{1}{d-1}}
\end{equation}
when $n$ is sufficiently large. Note that the spherical caps of radius $\delta/2$ centred at the points of $\om_n$  are pairwise non-overlapping.

Let now $u \in S^{d-1}$ be the unit direction vector of the largest $k$-term signed subset sum of $u_1, \ldots, u_n$, that is, the largest $k$-term sum of $\pm \om_n$. Then the norm of this maximal sum equals to the sum of the $k$ largest inner products of the vectors of $\pm \om_n$ taken with~$u$.

Our goal is to find a radius $R$ so that the cap $C(u, R )$ may contain at most $\gamma k$ points of $\pm \om_n$ with a parameter $\gamma \in (0,1)$. This  guarantees that
\begin{equation}\label{cdnk_gammak}
  c(d,n,k) \leq \gamma k + (1- \gamma) k \cos R.
\end{equation}
Note that the open spherical caps of radius $\delta/2$ centred at the points of $\pm \om_n \cap C(u, R)$ are all contained in $C(u, R + \frac{\delta}2)$. On the other hand, any point of $C(u, R + \frac{\delta}2)$ may be covered by at most two of the interiors of these caps. Thus, we deduce $\#(C(u,R) \cap \pm \om_n) \leq \gamma k$, and therefore that \eqref{cdnk_gammak} holds, if $R$ and $\gamma$ satisfy
\begin{equation}\label{Rgamma}
  \sigma\left(C_{ R + \delta / 2}\right) < \frac \gamma 2 k  \cdot \sigma( C_{\delta/2}).
\end{equation}
This is what we will show.

We are going to divide the argument again into two parts according to the magnitude of $k$, as different parameters will be needed depending on the range.

Let us first assume that $ 3 \leq k < 6 \cdot 100^{d-1}$. Define $R_1$ to be
\begin{equation}\label{R_1}
  R_1 = \frac{\delta}{2}\left(\left(\frac{3k}{8}\right)^{\frac{1}{d-1}}-1\right).
\end{equation}
Notice that $R_1>0$ due to $k\geq 3$.
By \eqref{deltaest} then
\begin{equation}\label{Rasymp}
  R_1 = \Theta\Big( \Big(\frac k n  \Big)^{\frac 1 {d-1}} \Big).
\end{equation}
In particular, $R_1 \rightarrow 0$ as $n \rightarrow \infty$. Since $\delta \rightarrow 0$ as well, for any $\alpha>0$ we have that
\begin{equation}\label{R_delta}
\tan \Big(R_1 + \frac \delta 2 \Big) \leq (1+\alpha) \Big(R_1 + \frac \delta 2 \Big)
\end{equation}
and
\[
\sin (\delta/2) \geq \frac{\delta}{2(1+\alpha)}
\]
for large enough $n$. Thus, by \eqref{sigma_C_R},
\[
\begin{split}
 \frac{\sigma(C_{ R_1 + \delta/2})}{\sigma(C_{\delta/2})}
\leq  \frac {\tan^{d-1} (R_1 + \delta/2)}{\sin^{d-1}(\delta/2)}
 & \leq  (1+\alpha)^{2(d-1)} \Big( \frac{2R_1}{\delta} + 1 \Big)^{d-1} \\
 & =  (1+\alpha)^{2(d-1)}\cdot \frac{3k}{8}\\
 &= k \cdot \frac 7 {16},
\end{split}
\]
with $\alpha= (\frac 7 6 )^{1/(2(d-1))} -1$, for sufficiently large $n$. This shows that \eqref{Rgamma} holds with $R = R_1$ and $\gamma = \frac 7 8$.
Since $R_1=\frac\delta2k^\frac{1}{d-1}\left(\left(\frac38\right)^{\frac{1}{d-1}}-\left(\frac1k\right)^\frac{1}{d-1}\right)$,
we obtain that
\[
\begin{split}
R_1 \geq \frac\delta2k^\frac1{d-1}\left(\frac38-\frac1k\right)\frac{1}{d-1}\left(\frac38\right)^{\frac{2-d}{d-1}}
    \geq \frac\delta {48 \, d} \cdot k^\frac1{d-1}
\end{split}
\]
for $k\in[3,\sqrt{n}]$.
Using that $\cos x < 1 - x^2 /4$ for $x \in [0, \frac \pi 2]$, by \eqref{cdnk_gammak} we deduce that
\[
c(d,n,k) \leq \frac{7k}{8} + \frac{k}{8} \left(1-\frac{R_1^2}{4}\right) < k - \frac{1}{8\cdot 48^2 d^2}k^\frac{d+1}{d-1}n^{-\frac{2}{d-1}}
\]
if $n$ is large enough. This establishes the first estimate of \eqref{general_upper} with $\alpha_1 = \frac{1}{8 \cdot 48^2}$.

 Next, we assume that $6 \cdot 100^{d-1} \leq k \leq n $.
Define $R_2$ so that
\begin{equation}\label{R2eq}
 \tan\left(R_2 + \frac \delta 2 \right)  = \frac 1 {2} \left( \frac k { 6n}  \right)^{\frac 1 {d-1}}.
\end{equation}
Note that for sufficiently large $n$, $R_2 > \delta$, since by  \eqref{deltaest},
\[
\tan \frac {3 \delta} 2 <  \frac {100}{99} \cdot \frac{3\delta}{2} < \frac{100}{2} n^{\frac{-1}{d-1}} \leq \frac{1}{2}\left(\frac k { 6n} \right)^{1/(d-1)}.
\]
Let $\alpha = 2^{\frac{1}{d-1}} - 1$. Then as above, for sufficiently large values of $n$, we obtain using \eqref{sigma_C_R} and \eqref{deltaest} that
\begin{align*}
\frac{\sigma(C_{ R_2 + \delta/2})}{\sigma(C_{\delta/2})}\leq  \frac {\tan^{d-1} (R_2 + \delta/2)}{\sin^{d-1}(\delta/2)}
 & \leq  2^{-(d-1)} \frac k { 6n} \cdot (1+\alpha)^{d-1} \left( \frac \delta 2 \right)^{-(d-1)} < \frac{ k}{3}\,.
\end{align*}
Therefore, \eqref{Rgamma} holds with $R = R_2$ and $\gamma= \frac 2 3$. Note that \eqref{R2eq} shows that $\tan(R_2 + \frac{\delta}{2}) \leq \frac 1 {12}$, and hence
\[
R_2 > \frac{2}{3}\left(R_2 + \frac{\delta}{2}\right) > \frac{2}{3} \cdot \frac 3 4 \tan\left(R_2 + \frac{\delta}{2}\right) = \frac 1 {4} \left( \frac k { 6n}  \right)^{\frac 1 {d-1}}
\]
for large enough $n$. Thus, \eqref{cdnk_gammak} implies (using that for small enough $x$, $\cos x < 1 - \frac {x^2} 4$ holds) that
\[
c(d,n,k) \leq \frac 2 3 k + \frac 1 3  k \cos R_2 \leq k -  k \cdot \frac 1 {64} \left( \frac k { 6n}  \right)^{\frac 2 {d-1}} \leq k - \frac{1}{64 \cdot 36}  k^{\frac{d+1}{d-1}} n^{- \frac{2}{d-1}} \,,
\]
which is the second estimate of \eqref{general_upper} with $\alpha_2 = \frac 1 {64 \cdot 36}$.

Finally, we establish \eqref{general_upper_largek}. Accordingly, assume that $k > \frac{1}{\sqrt{d}}e^{-\frac{d}{2}}\cdot n$.
Take $\om_n$ as before. By Theorem 6.1.6 of~\cite{BHS19}, $\om_n$ is  uniformly distributed,  i.e. for every closed set  $D\subset{S^{d-1}}$ with zero-measure relative boundary, \[
 \lim_{n \rightarrow \infty} \frac{\# (\om_n \cap D)}{n} = \sigma(D)
 \]
holds. Thus, $\pm \om_n$ is uniformly distributed on $S^{d-1}$ as well.

Let now $\eps\in (0, 1)$ be fixed, whose values we will set later.
A standard compactness argument yields that for large enough $n$
\begin{equation}\label{capest}
\frac{1}{1 + \eps} \cdot 2n \cdot \sigma(C)  < \#( \pm \om_n \cap C) < (1 + \eps) \cdot 2 n \cdot \sigma(C)
\end{equation}
is valid for every spherical cap $C$ with $\sigma(C) > \frac 1 {4 \sqrt{d}}e^{- \frac d 2 } $. We will assume this property from now onwards.

Let again $u \in S^{d-1}$ be the unit direction vector of the largest $k$-term sum of $\pm \om_n$. If the elements of $\pm \om_n$ are ordered according to their inner products with $u$ in decreasing order, then $
|u| = \sum_{i=1}^k \la u, v_i \ra.
$
Thus, if $\rho$ denotes the spherical distance between $u$ and $v_k$, then
\begin{equation}\label{unorm}
  |u| \leq \sum_{v \in ( \pm \om_n \cap C(u, \rho) ) } \la u, v \ra.
\end{equation}
Here, the interior of $C(u, \rho)$ contains strictly less than $k$ points of $\pm \om_n$, while $C(u, \rho)$ contains at least $k$ points of $\pm \om_n$. Because of \eqref{capest}, this shows that
\begin{equation}\label{urho}
\frac 1 {4 \sqrt{d}} \cdot e^{- \frac d 2}< \frac{1}{1 + \eps} \cdot \frac{k}{2 n} \leq \sigma(C(u, \rho)) \leq (1 + \eps) \frac k {2 n} \,.
\end{equation}
The symmetry of $\pm \om_n$ also implies that $\sigma(C(u, \rho)) \leq \frac 1 2$.

Now, Theorem 13.3.1 of~\cite{BHS19} shows that the discrete probability measure with equal point masses at the elements of $\pm \om_n$ converges to $\sigma(.)$ in the weak*-topology. This implies (see e.g. Theorem 1.6.5. in the same reference) that for a fixed spherical cap $C \subset S^{d-1}$,
\[
\frac {1}{2n} \sum_{v \in ( \pm \om_n \cap C ) } \la u, v \ra \rightarrow \int_{C} \la u, w \ra \dd \sigma(w)
\]
as $n \rightarrow \infty$. Therefore, there exists an index $N$ so that for every $n \geq N$ and for every spherical cap $C \subset S^{d-1}$ with
$ \sigma(C) >   \frac 1 {4 \sqrt{d}} \cdot e^{- \frac d 2}$,
\begin{equation}\label{vsumint}
  \frac{1}{2n} \sum_{v \in ( \pm \om_n \cap C ) } \la u, v \ra \leq (1 + \eps) \int_{C} \la u, w \ra \dd \sigma(w).
\end{equation}
Let now $R_3$ be the radius so that
\begin{equation}\label{eq_R3}
  \sigma(C_{R_3}) = ( 1 + \eps) \frac k {2 n} .
\end{equation}
Let $\varphi$ be defined by \eqref{phi_kn}. Since the function $f(x) = \frac 1 x e^{-\frac {x^2}2}$ is monotonically decreasing on $[0, \infty)$, the condition $k > \frac 1 {\sqrt{d}} e^{ - \frac d 2 } \cdot n $ ensures that $\varphi < \sqrt{d}$. On the other hand, since $k \leq n$, we also have that $\varphi \geq \sqrt{W_0(1)} \approx 0.7531$.

Then, by  \eqref{phi_kn},  \eqref{kappa_ineq}, \eqref{unorm}, \eqref{urho}, \eqref{vsumint}, 
and \eqref{eq_R3}, and using the standard estimate
\[
\left( 1- \frac{\varphi^2}{d} \right)^{\frac{d-1}{2}} < e^{-\frac{\varphi^2}{2}}
\]
which holds for every $d \geq 2$ and $0 < \varphi < \sqrt{d}$, we have that
\begin{align*}
|u| &\leq \sum_{v \in ( \pm \om_n \cap C(u, \rho) ) } \la u, v \ra \leq 2(1 + \eps) n \int_{C(u, \rho)} \la u, w \ra \dd \sigma(w) \\
& \leq 2 (1 + \eps)  n \int_{C(u, R_3)} \la u, w \ra \dd \sigma(w) \\
&=  2 (1 + \eps)  n \cdot\frac{(d-1) \kappa_{d-1}}{d \, \kappa_d} \int_{\cos R_3}^1 t (1 - t^2)^{\frac{d-3}{2}} \dd t\\
&\leq  2(1 + \eps)  n \cdot \frac { (d-1) \, \kappa_{d-1}}{d \, \kappa_d} \left(
\frac {\varphi}{\sqrt{d}}  \int_{\cos R_3}^1  (1 - t^2)^{\frac{d-3}{2}}  \dd t   +
\int_{ \frac {\varphi}{\sqrt{d}} }^1 t (1 - t^2)^{\frac{d-3}{2}} \dd t
\right)
\\
&=  2(1 + \eps)  n \cdot \frac {\varphi}{\sqrt{d} } \cdot \sigma(C_{R_3}) + 2(1 + \eps) n \cdot \frac { \kappa_{d-1}}{d \, \kappa_d} \left( 1 - \frac {\varphi^2} { d} \right)^{\frac{d-1}{2}}\\
& \leq (1 + \eps) ^2 \cdot \frac{\varphi}{\sqrt{d} }  \cdot k + 2(1 + \eps)  n \cdot \frac 1 {\sqrt{2 \pi}} \cdot \frac {\sqrt{d+2}}{d} \cdot e^{ - \frac{\varphi^2}{2}}\\
&=  \frac{\varphi}{\sqrt{d} }  \cdot k \left( (1 + \eps) ^2 + 2(1 + \eps) \frac 1 {\sqrt{2 \pi}} \cdot \sqrt{\frac {d+2}{d}} \right) \\
&\leq\frac  {4\varphi}{\sqrt{\pi}} \cdot \frac{1}{\sqrt{d} }\cdot k,
\end{align*}
where we set $\eps$ to be the positive solution of the quadratic equation
\[
(1+ \eps)^2 + \frac 2 {\sqrt{\pi}} (1 + \eps)  = \frac {4}{\sqrt{\pi}}\,.
\qedhere
\]
\end{proof}

We note that an estimate  of the same order of magnitude than \eqref{general_upper_largek} for the reduced range $k \geq \frac 1 d \cdot n$ may be obtained as follows. Assume that $n = m d$, and take $\om_n$ to be  $m$ copies of an orthonormal base in $\R^d$. It is not hard to see that if $a \geq 1 $ is an integer, then the sum of any $a \cdot m$ signed vectors of $\om_n$ has norm at most $\sqrt{a} \cdot m$. Thus, if we set $k = a \cdot m$, then we readily see that any $k$-term signed sum of $\om_n$ has norm at most
\[
\sqrt{a} \cdot m = \sqrt{ \frac {d}{a}} \cdot \frac{1}{\sqrt{d}}\cdot k
\]
which is slightly stronger than \eqref{general_upper_largek} if $a> d \cdot \frac{\pi}{4 \varphi^2}$. The estimate may be then extended to every $k \geq \frac 1 d \cdot n$ using the monotonicity property \eqref{k1k2}.

\section{Selecting two vectors}
We start this section by presenting two new, essentially different proofs for the estimate of  $c(d, d+1,2)$ yielded by the Welch bound. The first uses linear dependences (for several beautiful applications of that method, see~\cite{B08}).

\begin{proof}[First proof of Theorem~\ref{thm_k=2,n=d+1}]
By means of \eqref{norm_u+v}, it suffices to show that for any set of $d+1$ unit vectors $u_1, \ldots, u_{d+1} \in S^{n-1}$, there exist  indices $i \neq j \in [d+1]$ so that
  \[
  |\la u_i, u_j \ra| \geq \frac 1 d \,.
  \]
Since the number of vectors exceeds $d$, they must be linearly dependent: there exist reals $c_1, \ldots, c_{n+1}$, not all $0$, so that
\begin{equation*}\label{cui}
  \sum_{i=1}^{d+1} c_i u_i = 0.
\end{equation*}
Taking norm squares and using that $|u_i|^2 =1$ leads to
\begin{equation*}\label{cui2}
  \sum_{i=1}^{d+1} c_i^2 = - 2 \sum_{i<j} c_i c_j \la u_i, u_j \ra.
\end{equation*}
Let $M = \max_{i<j} |\la u_i , u_j \ra|$. Then
\begin{equation*}\label{cui3}
  \sum_{i=1}^{d+1} c_i^2 \leq 2 \sum_{i<j} |c_i| |c_j| M,
\end{equation*}
thus
\begin{equation}\label{Mest}
  M \geq \frac {\sum_{i=1}^{d+1} c_i^2 }{ 2 \sum_{i<j} |c_i| |c_j|}\,.
\end{equation}
By rescaling, we may assume that $\sum_{i=1}^{d+1} c_i^2 = 1$. Then,
\begin{equation}\label{cicj}
  2 \sum_{i<j} |c_i| |c_j| = \left( \sum_{i=1}^{d+1} |c_i| \right)^2 - \sum_{i=1}^{d+1} c_i^2 \leq (d+1) - 1 =d
\end{equation}
by the inequality between the arithmetic and quadratic means. Therefore, \eqref{Mest} shows that $M \geq \frac 1 d$, which was our goal.

When extremum holds, all the above inequalities must be equalities. This means that $M = \frac 1 d$. Therefore, by \eqref{cicj}, $|c_i| = \sqrt{d+1}$ for every $i$. Also, by \eqref{cui3}, $|\la u_i, u_j \ra| = M$ for every $i \neq j$. Thus, up to sign changes, the vectors $(u_i)_1^{d+1}$ must form the vertex set of a regular simplex centered at the origin. In this case, the larger signed sum of any two of the vectors indeed has norm $\sqrt{{2 (d+1)} / {d}}$.
\end{proof}

The second, independent proof uses induction on $d$, and applies Jung's theorem.

\begin{proof}[Second proof of Theorem~\ref{thm_k=2,n=d+1}]
The result clearly holds for $d=1$. We are going to prove it for $d$, assuming its validity for dimensions up to $d-1$.

Consider now an arbitrary set $\om_{d+1} = \{ u_1,\dots u_{d+1}\}$. Let $L = \mathrm{lin} \, \om_{d+1}$, its linear span. If $\mathrm{dim}(L)=m\leq d-1$, then we may assume w.l.o.g. that $L = \mathrm{lin} \{u_1, \ldots, u_{m+1}  \} $. The inductive hypothesis applied for the vector set $\{ u_1, \ldots, u_{m+1} \}$ implies that there exist signs $\eps_1, \eps_2$ and indices $1\leq i_1<i_2\leq m+1$, so that
\[
| \eps_1 u_{i_1}+\eps_2 u_{i_2}| \geq\sqrt{\frac{2(m+1)}{m}}>\sqrt{\frac{2(d+1)}{d}},
\]
proving the assertion.

Hence, we may suppose that $\mathrm{dim}(L)=d$ with $u_2,\dots,u_{d+1}$ being linearly independent.
Let $\rho>0$ be such that
\[
-\rho u_1\in\partial \mathrm{conv}(\{\pm u_2,\dots,\pm u_{d+1}\})=\partial P,
\]
where $P=\mathrm{conv}(\{\pm u_2,\dots,\pm u_{d+1}\})$, and where
$\mathrm{conv}$ and $\partial$ stand for convex hull and boundary, respectively.
Since $P$ is a polytope with $0\in\mathrm{int}(P)$, $-\rho u_1$ belongs to a facet of $P$, which does not contain $0$.
Hence there exist signs $\eps_2,\dots,\eps_{d+1}$ such that
\[
-\rho u_1\in\mathrm{conv}(\{\eps_2u_2,\dots,\eps_{d+1}u_{d+1}\}).
\]
In particular we have that
\[
0\in\mathrm{conv}(\{u_1,\eps_2u_2,\dots,\eps_{d+1}u_{d+1}\})=S,
\]
thus meaning that $\mathrm{R}(S)=1$, where $\mathrm{R}(.)$ stands for the circumradius (see \cite[Proposition 2.1]{BG17}). Hence, if $\mathrm{D}(S)$ denotes the diameter of $S$, then by Jung's theorem \cite{Ju01}
(see also \cite[Lem. 3]{Ba92} or \cite[(3)]{BG2}) we have that
\[
\frac{\mathrm{D}(S)}{\mathrm{R}(S)}\geq\sqrt{\frac{2(d+1)}{d}}\,.
\]
Since the diameter of $S$ is attained between two vertices of $S$, this means that
either
\[
\left| u_1-\eps_iu_i \right| \geq\sqrt{\frac{2(d+1)}{d}}\quad\text{or}\quad \left| \eps_{i_1}u_{i_1}-\eps_{i_2}u_{i_2} \right| \geq\sqrt{\frac{2(d+1)}{d}},
\]
for some $i\in\{2,\dots,d+1\}$ or some $2\leq i_1<i_2\leq d+1$, yielding the assertion.

If equality holds, then we must have equality in Jung's theorem. Therefore, the set of vertices
$\{u_1,\eps_2u_2,\dots,\eps_{d+1}u_{d+1}\}$ form the vertex set of a regular simplex, as desired.
\end{proof}

Next, we turn to estimates for large values of $n$.

\begin{proof}[Proof of Theorem~\ref{thm_k=2}]
We will use the fact that for two unit vectors $u,v \in S^{d-1}$ of geodesic distance $\delta \leq \frac \pi 2$, we have $|u + v| = 2 \cos \frac \delta 2$, and therefore
\begin{equation}\label{u+v}
   2 - \frac {\delta^2}{4} <|u + v|  < 2 - \frac{\delta^2}{5} .
\end{equation}

First, we show the lower bound. To this end, by \eqref{u+v}, it suffices to show that if $n$ is large enough, then  for any set $\om_n = \{ u_1, \ldots, u_n \} \subset S^{d-1}$, there exist two vectors of the set $\pm \om_n$ of $2n$ vectors, whose geodesic distance is at most $\delta_1:=1.001\cdot \sqrt{2} n^{-1/(d-1)}$. This is indeed guaranteed by \eqref{card_dsep}, since if $d$ is large enough, the maximal cardinality of a $\delta_1$-separated set in $S^{d-1}$ may not exceed
\[
23 \, (d-1)^{3/2 } \sin^{-(d-1)} \frac {\delta_1} 2 \cdot 2^{-(d-1)/2}  < 1.001^{d-1}
\Big( \frac {\sqrt{2}}{\delta_1}\Big)^{d-1} =n.
\]
Now, let us turn to the upper bound. Set $\delta_2 = 0.75 \cdot n^{-1/(d-1)}$, and let $X = \{x_1, \ldots, x_m \}$ be a maximal $ \delta_2$-separated subset of $S^{d-1}$ with respect to the the Euclidean distance this time. Then $X$ is a maximal $\delta_2'$-separated subset of $S^{d-1}$, with $\delta_2 < \delta_2' < 1.001 \, \delta_2$ if $n$ is large enough. Consequently, by \eqref{card_dsep},
\begin{equation}\label{m_est}
  m > \sqrt{2 \pi}  \sin^{-(d-1)} (1.001 \,\delta_2)> (1.001\,\delta_2)^{-(d-1)}>1.332^{d-1} n.
\end{equation}

Define a graph $G$ on the vertex set $[m] = \{1, 2, \ldots, m\}$ as follows: two non-equal indices $i$ and $j$ are connected if and only if  $|x_i + x_j| < \delta_2$. We are going to bound the maximal degree in $G$.

For every $i \in [m]$, let $B_i$ be the $d$-dimensional ball of radius $\delta_2 / 2$ centred at $x_i$. Take now an arbitrary $i \in [m]$, and let $B'$ be the ball of radius $\delta_2/2$ centred at $-x_i$. Note that if $ij$ is an edge of $G$, then $B'$ and $B_j$ intersect. On the other hand, $B_j$ and $B_{j'}$  do not overlap when $j \neq j'$. It readily follows by a simple geometric argument that the number of $j$'s connected to $i$ is at most $\tau_d$ +1, where $\tau_d$ is the {\em kissing number} in $d$ dimensions: the maximal number of non-overlapping equal-sized spheres in $\R^d$, all touching a central sphere of the same size. Indeed, by using the fact that $X$ is $ \delta_2$-separated, there exists at most one index $j$ so that $|x_i + x_j| < \delta_2 / 2$. On the other hand, assuming that both $j$ and $j'$ are adjacent to $i$, the distance between $-x_i$ and $x_j$, resp., $x_{j'}$ is at least $\delta_2 /2$, and using that $|x_{j} - x_{j'}| \geq \delta_2$, we see that projecting $x_j$ and $x_{j'}$ radially from $x_i$ to the sphere of radius $\delta_2$ centred at $x_i$ does not decrease their distance.

A classical result of Kabatiansky and Levenshtein \cite{KL78} states that
\[
\tau(d) \leq 2^{0.401d(1+o(1))}.
\]
Accordingly, for large enough $d$, we may assume that $\tau_d < 1.33^d -2$. Then, by the previous arguments, the maximum degree of $G$, denoted by $\Delta$, is at most $\tau_d + 1 < 1.33^d-1$.

Let $Y$ be a maximal independent set in $G$. The cardinality of $Y$ is the {\em independence number} of~$G$, which is at least $m/(1 + \Delta)$ by Brook's bound \cite{B41,B60}. Accordingly, using \eqref{m_est}, we obtain that
\[
|Y| \geq \frac m {1 + \Delta} \geq \frac {1.332^{d-1} n}{1.33^d} >n.
\]
Thus, we may set $\om_n$ to be a set of  $n$ distinct vectors from $Y$. For any $x_i, x_j \in \om_n$, we have that $|x_i - x_j| \geq \delta_2$ and $|x_i + x_j| \geq \delta_2$. Then, using that $|x_i - x_j|^2 + |x_i + x_j|^2 =4$,
\[
\max\{|x_i - x_j|^2,|x_i + x_j|^2  \} \leq 4 - \delta_2^2,
\]
which implies that any signed sum of two distinct elements of $\om_n$ is bounded above by
\[
2 - \frac{\delta_2^2}{4} < 2 - 0.14\, n^{- \frac{2}{d-1}} \,.
\qedhere
\]

We note that a slightly weaker upper bound may be obtained simply by defining edges of $G$ corresponding to pairs $i,j$ such that $|x_i + x_j| < \delta_2 /2$. Then
the degree of any vertex may be at most 1, which would result in an $m/2$ lower bound for the independence number of $G$. This advantage is, however, balanced out by the weaker distance bound.
\end{proof}

\section{Sharp bounds in the plane}
First, we state and prove a simple lemma. To that end, we identify $S^1$ with the complex unit circle. For any $u \in S^1$ of the form $u = e^{i \psi}$, $\psi$ is called the {\em angle} of $u$.

\begin{lemma}\label{lemma_arc}
Assume that $\varphi \in [0, \pi]$, and that the unit vectors $u_1, \ldots, u_k \in S^1$ all have angles in the interval $[0, \varphi]$. Then
\begin{equation}\label{minnorm_even}
  \left| u_1 + \ldots + u_k \right| \geq k \cos  \frac \varphi 2
\end{equation}
when $k$ is even, and
\begin{equation}\label{minnorm_odd}
  \left| u_1 + \ldots + u_k \right| \geq \sqrt{1 + (k-1)(k+1) \cos^2 \frac {\varphi} {2}}
\end{equation}
when $k$ is odd.
\end{lemma}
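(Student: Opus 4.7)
The plan is to rotate the configuration by $e^{-i\varphi/2}$ so that the angles lie symmetrically in $[-\varphi/2,\varphi/2]$, writing $\eta_j = \psi_j - \varphi/2$ and $S = \sum_{j=1}^k e^{i\eta_j} = X + iY$; this rotation preserves $|S|$. For \emph{even} $k$ the argument is then immediate: since $\cos\eta_j \geq \cos(\varphi/2)$,
\[
|S| \;\geq\; X \;=\; \sum_{j=1}^k \cos\eta_j \;\geq\; k\cos(\varphi/2),
\]
which is \eqref{minnorm_even}.

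For \emph{odd} $k$ this projection bound falls short of \eqref{minnorm_odd} by exactly $\sin^2(\varphi/2)$ inside the square root, so a finer analysis is required. I would show that the minimum of $|S|^2$ over the hypercube $[-\varphi/2,\varphi/2]^k$ is attained at a \emph{vertex}, i.e.\ with every $\eta_j \in \{\pm \varphi/2\}$. From $\partial |S|^2/\partial \eta_l = 2(Y\cos\eta_l - X\sin\eta_l)$, an interior critical point in the coordinate $\eta_l$ forces $u_l \parallel S$, hence $u_l = \pm S/|S|$ (and $S\neq 0$ follows from $X \geq k\cos(\varphi/2) > 0$ when $\varphi < \pi$, and from a parity count when $\varphi = \pi$). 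Because $S$ is a sum of vectors with angles in $[-\varphi/2,\varphi/2]$, its direction lies in the same arc, so the antiparallel case $u_l = -S/|S|$ would place $\eta_l$ outside the hypercube whenever $\varphi \leq \pi$. In the parallel case $u_l = +S/|S|$, the second derivative equals $2 - 2|S|$, so the critical point can be a local minimum only when $|S| \leq 1$; but a direct expansion shows that if $c \geq 1$ interior vectors align with $S$, then $|S| \geq c$, and combining with $|S|\leq 1$ forces $c=1$ together with the cancellation of the boundary contributions, which in turn forces $k = 1$, contradicting $k \geq 3$.

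Once the minimum is pinned to vertex configurations, setting $a$ and $b = k - a$ as the multiplicities at $-\varphi/2$ and $+\varphi/2$ yields
\[
|S|^2 = k^2\cos^2(\varphi/2) + (b-a)^2\sin^2(\varphi/2) = k^2 - 4ab\sin^2(\varphi/2),
\]
which is minimized by maximizing $ab$. For odd $k$ one has $ab \leq (k^2-1)/4$, attained at $\{a,b\} = \{(k-1)/2,(k+1)/2\}$, and substitution gives $|S|^2 \geq 1 + (k-1)(k+1)\cos^2(\varphi/2)$, which is \eqref{minnorm_odd}.

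The main obstacle is the second-order analysis in the odd case. Because the aligned interior critical point $u_l = +S/|S|$ is a bona fide local minimum in its coordinate slice whenever $|S| \leq 1$, one cannot push every interior coordinate to the boundary using first-derivative monotonicity alone. The explicit structural identity $|S| = \sqrt{(a+b)^2\cos^2(\varphi/2) + (b-a)^2\sin^2(\varphi/2)} + c$ for mixed vertex--interior configurations, combined with the regime constraint $|S|\leq 1$, is what rules out this degenerate scenario for $k \geq 3$ and closes the argument.
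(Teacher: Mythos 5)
Your proof is correct in substance and, in the odd case, takes a genuinely different route from the paper. The even case coincides with the paper's argument (both bound the norm by projecting the sum onto the bisector direction $e^{i\varphi/2}$). For odd $k$, the paper argues by a discrete exchange: at a norm-minimizing configuration, any vector not at an endpoint of the arc is replaced outright by $1$ or by $e^{i\varphi}$, with the claim ("it is easy to see") that this decreases the norm; hence the minimizer consists of copies of the two endpoints, and balancing multiplicities gives $(k-1)/2$ and $(k+1)/2$. You instead run a continuous first/second-order analysis of $|S|^2$ over the hypercube $[-\varphi/2,\varphi/2]^k$: an interior critical coordinate forces $u_l = \pm S/|S|$, the antiparallel option is excluded because $S$ lies in the convex cone spanned by the arc, and the parallel option is eliminated by the second derivative $2-2|S|$ together with the identity $|S| = c + |T|$. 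After that, both proofs perform the same multiplicity optimization, $|S|^2 = k^2 - 4ab\sin^2(\varphi/2)$ with $ab$ maximized. What your approach buys is rigor exactly where the paper is thin: the one-shot replacement of $u_i$ by $1$ changes $|S|^2$ by $2|S|\left(\cos\psi - \cos(\psi-\theta_i)\right) + 2(1-\cos\theta_i)$, whose sign is not automatic when $|S|$ is small, and the problematic regime is precisely the degenerate aligned-vector case that you isolate and kill with the second-order condition. The cost is length and the delicacy of that degenerate analysis.

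One concrete soft spot: your final contradiction ("$c=1$ and $T=0$ force $k=1$") implicitly uses $|T| \geq (k-c)\cos(\varphi/2) > 0$, which fails at $\varphi = \pi$. There the boundary vectors sit at $\pm i$ and can cancel in pairs ($a=b$), so a configuration with one interior vector aligned with $S$ and $(k-1)/2$ antipodal pairs is a genuine critical configuration with $|S|=1$ for any odd $k \geq 3$, and no contradiction arises. This does not threaten the lemma, since the claimed bound at $\varphi=\pi$ is exactly $\sqrt{1+(k^2-1)\cos^2(\pi/2)}=1$, but your argument as written asserts a contradiction that does not exist in that case. Patch it either by treating $\varphi=\pi$ separately (the exceptional configurations attain precisely the stated bound), or by proving the lemma for $\varphi<\pi$ and passing to $\varphi=\pi$ by continuity (scale all angles by $1-\epsilon$ and let $\epsilon \to 0$). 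You should also record that $k=1$ is trivially true, since your contradiction argument is only invoked for $k \geq 3$.
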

\begin{proof}
Let $v = e^{i \varphi/2}\in S^1$. Then
\[
\la u_i, v \ra \geq \cos  \frac \varphi 2
\]
holds for every $i \in [k]$. Consequently,
\[
\Big \la \sum_{i=1}^k u_i, v  \Big \ra \geq k \cos  \frac \varphi  2 \,.
\]
Clearly, this quantity is also a lower bound on the norm of $ \sum_{i=1}^k u_i$. When $k$ is even,  we reach \eqref{minnorm_even}. That this bound is sharp is shown by considering the family consisting of $k/2$ copies of $1$ and $k/2$ copies of $e^{i \varphi}$.

Let us now assume that $k$ is odd, and that $u_1, \ldots, u_k$ are so that $ \sum_{i=1}^k u_i = u = e^{i \psi}$ has minimal norm.
 Clearly, $\psi \in [0, \varphi]$. We may assume that the vectors $u_1, \ldots, u_l$ are of angle at most $\psi$, while $u_{l+1}, \ldots, u_k$ have angle in $(\psi, \varphi]$. It is easy to see that replacing any $u_i$ with $1$ for $i\leq l$, or any $u_j$ with $e^{i \varphi}$ for $j \in [l+1, k]$ results in decreasing the norm of $\sum_{i=1}^k u_i $, unless the vectors to be replaced are already $1$ or $e^{i \varphi}$. Therefore, the extremal systems consist of copies of these two vectors at the ends of the circular arc of length~$\varphi$. If $l < (k-1)/2$, then swapping one copy of $e^{i \varphi}$ by $1$ again results in decreasing the norm of the sum. Thus, by symmetry, we conclude that the minimum norm is attained at the vector system consisting of $(k-1)/2$ copies of $1$ and $(k+1)/2$ copies of~$e^{i \varphi}$. Applying the law of cosines leads to \eqref{minnorm_odd}, which is again a sharp bound.
\end{proof}

\begin{proof}[Proof of Theorem~\ref{thm_planarbound}]
In light of Lemma~\ref{lemma_arc}, it suffices to show that from any set of $2n$ unit vectors of the form $\{u_1, -u_1,  \ldots, u_n, -u_n \} \subset S^1$ , we may select $k$ vectors which belong to an arc of angle not larger than $(k-1) \pi / n$. Order the vector system with respect to positive orientation along the circle, and re-index the vectors as $v_1, v_2, \ldots, v_{2n}$ according to this ordering. For any $i \in [1, 2n]$, let $\alpha_i$ be the angle between $v_i$ and $v_{i+1}$, and $\beta_i$ be the angle between $v_i$ and $v_{i+k-1}$ (with the indices being understood modulo $2 n$). Then $\sum_{i=1}^{2n} \alpha_i = 2 \pi$, and for every $i$,
\[
\beta_i = \sum_{j=0}^{k-2} \alpha_{i + j}.
\]
Hence,
\[
\sum_{i=1}^{2n} \beta_i = (k-1) \sum_{i=1}^{2n} \alpha_i  = 2 (k-1) \pi.
\]
Thus, there exists an index $i$ for which $\beta_i \leq (k-1)\pi/n$, which was our goal to prove.

Now, let us turn to the case of equality. The above bound may only be sharp for vector systems for which $\beta_i = (k-1)\pi/n$ for every $i$. On the other hand, in order for the estimate of Lemma~\ref{lemma_arc} being sharp, one needs that $\{ \pm u_1, \ldots, \pm u_n\}$ consists of copies of a given point set with multiplicity $k/2$ (for even values of $k$), or with multiplicity at least $(k-1)/2$ and $(k+1)/2$, alternatingly (for odd values of $k$). Combining these two conditions, we deduce that the vector system $\{ \pm u_1, \ldots, \pm u_n \}$ must be the vertex set of a regular $ ((2n)/(k-1))$-gon inscribed in $S^1$, with each of its vertices being taken with multiplicity $k-1$. Given that $\{ \pm u_1, \ldots, \pm u_n \}$ is an antipodal set, we also derive that $(2n)/(k-1)$ must be even, that is, $k-1$ must divide $n$. When this condition holds, the vector system described above indeed yields equality in \eqref{planarbound_even} and \eqref{planarbound_odd}.
\end{proof}

\section{Extremality of the simplex in even dimensions}

\begin{proof}[Proof of Theorem~\ref{thm_simplex}]
Let $\eps$ provide a  sum of maximal norm:
\begin{equation}\label{ueq}
  u = \sum_{i=1}^{d+1} \eps_i u_i.
\end{equation}
Since $d$ is even, and multiplying all coefficients by $-1$ does not change the norm of the sum, we may assume that
\begin{equation}\label{esum}
\sum_{i=1}^{d+1} \eps_i \geq 1.
\end{equation}
By Proposition~\ref{bang}, $\la \eps_i u_i, u \ra \geq 1$
for every $i$. Therefore, since $\eps_i = \pm 1$,
\begin{equation}\label{eiui}
\la (\eps_i + 1) u_i, u \ra \geq   1 + \eps_i
\end{equation}
holds for every $i$ (note that \eqref{eiui} holds trivially for $\eps_i = -1$). Also, using that $\sum u_i = 0$,  \eqref{ueq} leads to
\[
u = \sum_{i=1}^{d+1} (\eps_i+1) u_i.
\]
Therefore, summing \eqref{eiui} over all the indices $i$, and applying \eqref{esum},
\[
|u|^2 = \sum_{i=1}^{d+1} \la (\eps_i +1) u_i, u \ra \geq d+1 + \sum_{i=1}^{d+1} \eps_i \geq d+2,
\]
which is the desired estimate.
\end{proof}

\section{Acknowledgements}

The authors are grateful to A. Polyanski for the inspiring conversations and to D. Hardin and E. Saff for the useful advices.

\end{document}